\documentclass[a4paper, 12pt]{article}
\usepackage{geometry}                
\usepackage{graphicx}
\usepackage{amssymb,amsmath,scalerel}
\usepackage{amsthm}
\usepackage{thmtools, thm-restate}
\usepackage[plainpages = false]{hyperref}
\usepackage[capitalize]{cleveref}
\usepackage[utf8]{inputenc}
\usepackage{enumerate}
\usepackage{xcolor}
\usepackage{tikz-cd}
\usepackage[backend = biber, style = numeric, giveninits]{biblatex}

\addbibresource{/Users/DrsP/Documents/Bronnen/AlgemeneFiles/Referenties_Nieuw.bib}

\DeclareMathOperator*{\Times}{\scalerel*{\times}{\textstyle\sum}}

\DeclareMathOperator{\Aut}{Aut}
\DeclareMathOperator{\Diag}{Diag}
\DeclareMathOperator{\End}{End}

\DeclareMathOperator{\Fix}{Fix}

\DeclareMathOperator{\GL}{GL}

\DeclareMathOperator{\Id}{Id}
\DeclareMathOperator{\im}{Im}

\DeclareMathOperator{\SpecR}{Spec_R}
\DeclareMathOperator{\SpecP}{Spec_{\Pi}}

\newcommand*{\eg}{e.g.\ }

\newcommand*{\grpgen}[1]{\left\langle{#1}\right\rangle}

\newcommand*{\ie}{i.e.\ }

\newcommand*{\inv}[1]{{#1}^{-1}}
\newcommand*{\J}{\mathcal{J}}

\newcommand*{\N}{\mathbb{N}}

\newcommand*{\Rconj}[1]{\sim_{#1}}
\newcommand*{\Rinf}{R_{\infty}}

\newcommand*{\restr}[2]{{#1}\big|_{#2}}
\newcommand*{\size}[1]{\left| #1 \right|}

\newcommand*{\transpose}[1]{{#1}^\mathsf{T}}

\newcommand*{\ZmodZ}[1]{\Z / #1 \Z}
\newcommand*{\ZnZ}{\ZmodZ{n}}
\newcommand*{\ZpZ}{\ZmodZ{p}}
\newcommand*{\Z}{\mathbb{Z}}

\renewcommand{\phi}{\varphi}
\renewcommand{\P}{\mathcal{P}}

\let\originalleft\left
\let\originalright\right
\renewcommand{\left}{\mathopen{}\mathclose\bgroup\originalleft}
\renewcommand{\right}{\aftergroup\egroup\originalright}

\declaretheorem[style=definition, name = Definition, numberwithin=section]{defin}

\declaretheorem[name = Theorem, sibling=defin]{theorem}
\declaretheorem[name = Lemma, sibling=defin]{lemma}
\declaretheorem[name = Proposition, sibling=defin]{prop}
\declaretheorem[name = Corollary, sibling=defin]{cor}
\declaretheorem[style=remark, name = Remark, numbered = no]{remark}

\declaretheorem[name = Theorem, numbered = no]{theorem*}

\numberwithin{equation}{section}

\crefname{prop}{Proposition}{Propositions}
\crefname{cor}{Corollary}{Corollaries}

\title{The Reidemeister spectrum of finite abelian groups}
\author{Pieter Senden\footnote{Researcher funded by FWO-fellowship fundamental research (file number 1112522N)}}
\date{}                                           

\begin{document}
\maketitle

\begin{abstract}
	For a finite abelian group \(A\), the Reidemeister number of an endomorphism \(\phi\) equals the size of \(\Fix(\phi)\), the set of fixed points of \(\phi\). Consequently, the Reidemeister spectrum of \(A\) is a subset of the set of divisors of \(\size{A}\). We fully determine the Reidemeister spectrum of \(A\), that is, which divisors of \(\size{A}\) occur as the Reidemeister number of an automorphism. To do so, we discuss and prove a more general result providing upper and lower bounds on the number of fixed points of automorphisms related to a given automorphism \(\phi\).
\end{abstract}
\let\thefootnote\relax\footnote{2020 {\em Mathematics Subject Classification.} Primary: 20K30, 20E45}
\let\thefootnote\relax\footnote{{\em Keywords and phrases.} Finite abelian groups, twisted conjugacy, Reidemeister number, Reidemeister spectrum, fixed points}
\section{Introduction}

Given a group \(G\) and an endomorphism \(\phi\), we define the \(\phi\)-twisted conjugacy relation on \(G\) by stating that \(x, y \in G\) are \(\phi\)-conjugate if there exists a \(z \in G\) such that \(x = z y \inv{\phi(z)}\). If \(x\) and \(y\) are \(\phi\)-conjugate, we write this as \(x \Rconj{\phi} y\). The number of \(\phi\)-conjugacy classes is called the Reidemeister number of \(\phi\) and it is denoted by \(R(\phi)\). Furthermore, we define the Reidemeister spectrum of \(G\) as \(\SpecR(G) := \{R(\psi) \mid \psi \in \Aut(G)\}\).

One of the general objectives is to determine the complete Reidemeister spectrum of a group. There are two extreme cases that can occur: (1) \(G\) has the \(\Rinf\)-property, meaning that \(\SpecR(G) = \{\infty\}\), and (2) \(G\) has full Reidemeister spectrum, meaning that \(\SpecR(G) = \N_{0} \cup \{\infty\}\). The first case in particular has been extensively studied. Non-abelian Baumslag-Solitar groups \cite{FelshtynGoncalves06} and their generalisations \cite{TabackWong08}, certain extensions of linear groups by a countable abelian group \cite{MubeenaSankaran14}, Thompson's group \(F\) \cite{BleakFelshtynGoncalves08} all have the \(\Rinf\)-property; the free nilpotent group \(N_{r, c}\) of rank \(r \geq 2\) and class \(c \geq 1\) has been proven to have \(\Rinf\)-property if and only if \(c \geq 2r\), see \eg \cite{Romankov11,DekimpeGoncalves14}. We refer the reader to \cite{FelshtynNasybullov16} for a more exhaustive list of examples.

For the second extreme case, fewer examples of groups have been found. One family of such group are the groups \(N_{r, 2}\) where \(r \geq 4\) \cite{DekimpeTertooyVargas20}. Finally, groups whose Reidemeister spectrum has been fully determined bu have neither the \(\Rinf\)-property nor full Reidemeister spectrum include the semidirect products \(\Z^{n} \rtimes \ZmodZ{2}\), where \(n \geq 2\) and \(\ZmodZ{2}\) acts by inversion \cite{DekimpeKaiserTertooy19}.

For finite groups, however, neither extreme case can occur. As far as the author knows, there is only little literature concerning twisted conjugacy and Reidemeister numbers on finite groups. A.\ Fel'shtyn and R.\ Hill proved that the Reidemeister number of an endomorphism \(\phi\) of a finite group equals the number of (ordinary) conjugacy classes that are fixed by \(\phi\) \cite[Theorem~5]{FelshtynHill94}, and also discussed Reidemeister zeta functions on finite groups. Still, information about Reidemeister numbers of finite groups can aid in determining the Reidemeister spectrum of infinite groups, since finitely generated residually finite groups can be studied by looking at their finite characteristic quotients.

Given a finite group \(G\), it is theoretically possible to compute its Reidemeister spectrum using a computer; for instance, S.\ Tertooy has developed a GAP-package \cite{TertooyGAP} that has these functionalities. However, for an arbitrary finite group, the only feasible way to do so is to use either the definition or the result by A.\ Fel'shtyn and R.\ Hill relating Reidemeister numbers to fixed conjugacy classes. Either method requires a substantial amount of computation time if the order of \(G\) increases, since one has to determine \(\Aut(G)\), possibly the set of all conjugacy classes, and the Reidemeister number \(R(\phi)\) for each \(\phi \in \Aut(G)\). To reduce this time one can try and find explicit expressions for the Reidemeister spectrum of certain (families of) finite groups or even just methods that do not require to fully compute \(\Aut(G)\) in order to determine \(\SpecR(G)\). The former has been done by the author for split metacyclic groups of the form \(C_{n} \rtimes C_{p}\) where \(p\) is a prime number in \cite{Senden21a}.

The aim of this paper is to completely determine the Reidemeister spectrum of finite abelian groups. We would like to mention that Reidemeister spectra of infinite abelian groups, on the other hand, have already been studied, see \cite[\S 3]{Romankov11},\cite{DekimpeGoncalves15,GoldsmithKarimiWhite19}.

This paper is organised as follows. In Section 2, we recall the necessary results regarding Reidemeister numbers and reduce the problem to finite abelian groups of prime power order. In Section 3, we determine the Reidemeister spectrum of finite abelian \(p\)-groups with \(p\) an odd prime. In Section 4, finally, we determine the Reidemeister spectrum of finite abelian \(2\)-groups by solving a more general problem regarding fixed points of automorphisms of finite abelian \(p\)-groups.



Unless otherwise stated, \(p\) denotes a prime number.
\section{Preliminaries}
\begin{prop}	\label{prop:ReidemeisterNumberEqualsNumberOfFixedPoints}
	Let \(A\) be a finite abelian group and \(\phi \in \End(A)\). Then \(R(\phi) = \size{\Fix(\phi)}\).
\end{prop}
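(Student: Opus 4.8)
The plan is to rewrite the \(\phi\)-twisted conjugacy relation in additive notation and recognise its classes as cosets of a subgroup. Switching to additive notation for \(A\), the condition \(x \Rconj{\phi} y\), \ie that there exists \(z \in A\) with \(x = z y \inv{\phi(z)}\), becomes: there exists \(z \in A\) with \(x = y + z - \phi(z) = y + (\Id_{A} - \phi)(z)\). Since \(A\) is abelian, \(\Id_{A} - \phi\) is a group endomorphism of \(A\), so \(\im(\Id_{A} - \phi)\) is a subgroup, and the displayed condition says precisely that \(x \Rconj{\phi} y\) if and only if \(x - y \in \im(\Id_{A} - \phi)\). Hence the \(\phi\)-conjugacy classes are exactly the cosets of \(\im(\Id_{A} - \phi)\) in \(A\), so \(R(\phi) = [A : \im(\Id_{A} - \phi)]\).

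Next I would connect this index to the fixed points. Since \(\Fix(\phi) = \ker(\Id_{A} - \phi)\), the first isomorphism theorem applied to the endomorphism \(\Id_{A} - \phi\) of the finite group \(A\) gives
\[
	R(\phi) = [A : \im(\Id_{A} - \phi)] = \frac{\size{A}}{\size{\im(\Id_{A} - \phi)}} = \size{\ker(\Id_{A} - \phi)} = \size{\Fix(\phi)},
\]
which is the claim. (Alternatively, one could invoke the Fel'shtyn--Hill description of \(R(\phi)\) for finite groups as the number of ordinary conjugacy classes fixed by \(\phi\); in an abelian group these classes are singletons, so that number is just \(\size{\Fix(\phi)}\).)

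I do not expect a real obstacle: the statement is essentially bookkeeping. The only point deserving care is the use of commutativity — it makes \(\Id_{A} - \phi\) a homomorphism, which is what turns the twisted conjugacy classes into genuine cosets of a subgroup, and it is also used implicitly in rewriting \(z y \inv{\phi(z)}\) additively. Finiteness of \(A\) enters only to guarantee the cardinalities in the final chain of equalities are finite; the coset description of \(R(\phi)\) is valid for any abelian group.
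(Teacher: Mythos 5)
Your proof is correct and follows essentially the same route as the paper: rewrite \(\phi\)-conjugacy additively as \(x - y \in \im(\Id - \phi)\), conclude \(R(\phi) = [A : \im(\Id - \phi)]\), and apply the first isomorphism theorem together with finiteness to identify this index with \(\size{\ker(\Id - \phi)} = \size{\Fix(\phi)}\). No gaps.
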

\begin{proof}
	Note that, for all \(x, y \in A\), we have
	\[
		x \Rconj{\phi} y \iff \exists z \in A: x = z + y - \phi(z) \iff x - y \in \im(\Id - \phi).
	\]
	Therefore, \(R(\phi) = [A : \im(\Id - \phi)]\). Since \(A\) is finite, we moreover have that
	\[
		[A : \im(\Id - \phi)] = \frac{\size{A}}{\size{\im(\Id - \phi)}} = \size{\ker(\Id - \phi)} = \size{\Fix(\phi)},
	\]
	by the first isomorphism theorem for groups.
\end{proof}
\begin{cor}	\label{cor:SpecRFiniteAbelianGroupsConsistsOfDivisors}
Let \(A\) be a finite abelian group. Then \(\SpecR(A) \subseteq \{d \in \N \mid d \text{ divides } \size{A}\}\).
\end{cor}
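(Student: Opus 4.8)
The statement is an immediate consequence of \Cref{prop:ReidemeisterNumberEqualsNumberOfFixedPoints}, so the plan is short. First I would fix an arbitrary \(\psi \in \Aut(A)\) and apply the proposition to rewrite \(R(\psi) = \size{\Fix(\psi)}\). Next I would observe that \(\Fix(\psi) = \ker(\Id - \psi)\) is a subgroup of \(A\), so that Lagrange's theorem yields that \(\size{\Fix(\psi)}\) divides \(\size{A}\). Since \(\SpecR(A)\) is by definition the set \(\{R(\psi) \mid \psi \in \Aut(A)\}\), and since finiteness of \(A\) forces every \(R(\psi)\) to be a positive integer rather than \(\infty\), it follows that each element of \(\SpecR(A)\) divides \(\size{A}\), which is exactly the asserted inclusion.

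There is no genuine obstacle here: the entire content has already been isolated in \Cref{prop:ReidemeisterNumberEqualsNumberOfFixedPoints}, whose role is precisely to convert the a priori combinatorial quantity \(R(\psi)\) — a count of \(\phi\)-twisted conjugacy classes — into the cardinality of a subgroup of \(A\), after which divisibility is automatic. (One could equally well read it off the intermediate identity \(R(\psi) = [A : \im(\Id - \psi)]\) established in that proof, but phrasing it via \(\size{\Fix(\psi)}\) matches the form in which the bound will be used later.) The only mild subtlety worth a remark is that \(\SpecR(A)\) a priori lies in \(\N_0 \cup \{\infty\}\); the finiteness of \(A\) is what rules out the value \(\infty\) and keeps us inside the set of divisors of \(\size{A}\).
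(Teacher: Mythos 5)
Your proof is correct and matches the intended argument: the paper leaves this corollary without an explicit proof precisely because it follows from \Cref{prop:ReidemeisterNumberEqualsNumberOfFixedPoints} together with Lagrange's theorem applied to the subgroup \(\Fix(\psi)\) (or equivalently from the index formula \(R(\psi) = [A : \im(\Id - \psi)]\)), exactly as you describe.
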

\begin{lemma}	\label{lem:ReidemeisterNumbersExactSequence}
	Let \(G\) be a group, \(\phi \in \End(G)\) and \(N\) a \(\phi\)-invariant normal subgroup of \(G\) (\ie \(\phi(N) \leq N\)). Denote by \(\bar{\phi}\) the induced endomorphism on \(G / N\) and by \(\phi'\) the induced endomorphism on \(N\). Then \(R(\phi) \geq R(\bar{\phi})\).
	
	If, moreover, \(G\) is finite abelian, then \(R(\phi') \leq R(\phi)\).
\end{lemma}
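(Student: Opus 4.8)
The plan is to handle the two inequalities separately, since only the second one uses the finite abelian hypothesis. For \(R(\phi) \geq R(\bar{\phi})\), I would work directly with twisted conjugacy classes via the canonical projection \(\pi \colon G \to G/N\). First I would check that \(\pi\) descends to a well-defined map on conjugacy classes: if \(x = z y \inv{\phi(z)}\) for some \(z \in G\), then applying \(\pi\) gives \(\pi(x) = \pi(z)\,\pi(y)\,\inv{\bar{\phi}(\pi(z))}\), so \(x \Rconj{\phi} y\) implies \(\pi(x) \Rconj{\bar{\phi}} \pi(y)\). Since \(\pi\) is surjective, the induced map from the set of \(\phi\)-conjugacy classes of \(G\) onto the set of \(\bar{\phi}\)-conjugacy classes of \(G/N\) is surjective, which yields \(R(\phi) \geq R(\bar{\phi})\) (with the usual convention that this also covers the case \(R(\phi) = \infty\)). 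This step needs neither finiteness nor commutativity.

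For the second inequality I would invoke \cref{prop:ReidemeisterNumberEqualsNumberOfFixedPoints}. Since \(G\) is finite abelian, \(R(\phi) = \size{\Fix(\phi)}\) and \(R(\phi') = \size{\Fix(\phi')}\). Because \(\phi'\) is simply the restriction of \(\phi\) to \(N\), its fixed-point set is \(\Fix(\phi') = \{\, n \in N \mid \phi(n) = n \,\} = \Fix(\phi) \cap N \subseteq \Fix(\phi)\), and hence \(\size{\Fix(\phi')} \leq \size{\Fix(\phi)}\), i.e.\ \(R(\phi') \leq R(\phi)\).

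I expect the only genuinely subtle point to be explaining why the commutativity (and finiteness) hypotheses are needed for the second statement, rather than obtaining it "for free" from the inclusion \(N \hookrightarrow G\) the way the first statement comes from the projection. The inclusion does send \(\phi'\)-twisted classes of \(N\) into \(\phi\)-twisted classes of \(G\), but this map need not be injective: two elements of \(N\) can be \(\phi\)-conjugate in \(G\) via a conjugating element lying outside \(N\) without being \(\phi'\)-conjugate inside \(N\). So the naive argument fails, and one really does pass through the fixed-point description of \(R\) from \cref{prop:ReidemeisterNumberEqualsNumberOfFixedPoints}, which is available precisely because \(G\) is finite abelian.
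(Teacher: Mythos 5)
Your proof is correct and follows essentially the same route as the paper: the paper simply cites the first inequality as well-known (your surjectivity-of-the-induced-map argument is exactly the standard proof being referenced), and your second part, using \(\Fix(\phi') = \Fix(\phi) \cap N \subseteq \Fix(\phi)\) together with \cref{prop:ReidemeisterNumberEqualsNumberOfFixedPoints}, is identical to the paper's. Your closing remark about why the inclusion \(N \hookrightarrow G\) does not give the second inequality for free is a correct and worthwhile observation, though not needed for the proof.
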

\begin{proof}
	The first inequality is well-known, see \eg \cite[Lemma~1.1]{GoncalvesWong09}. For the second, if \(G\) is finite abelian, we know that \(R(\phi) = \size{\Fix(\phi)}\) and \(R(\phi') = \size{\Fix(\phi')}\). As \(\Fix(\phi') \leq \Fix(\phi)\), the inequality \(R(\phi') \leq R(\phi)\) follows.
\end{proof}

\begin{defin}
	Let \(A_{1}, \ldots, A_{n}\) be sets of natural numbers. We define
	\[
		\prod_{i = 1}^{n} A_{i} := \{a_{1} \ldots a_{n} \mid \forall i \in \{1, \ldots, n\}: a_{i} \in A_{i}\}.
	\]
	If \(A_{1} = \ldots = A_{n} =: A\), we also write \(A^{(n)}\).
\end{defin}

The following lemma can be found in \eg \cite[Corollary~2.6]{Senden21}.
\begin{lemma}	\label{lem:ReidemeisterNumbersOfAut(G1)xAut(Gn)}
	Let \(G_{1}, \ldots, G_{n}\) be groups and put \(G = \Times\limits_{i = 1}^{n} G_{i}\). Then
	\[
		\prod_{i = 1}^{n} \SpecR(G_{i}) \subseteq \SpecR(G).
	\]
	Equality holds if \(\Aut(G) = \Times\limits_{i = 1}^{n} \Aut(G_{i})\).
\end{lemma}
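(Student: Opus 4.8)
The plan is to establish the key identity that for any choice $\phi_{i} \in \Aut(G_{i})$, the product automorphism $\phi := \Times_{i=1}^{n} \phi_{i} \in \Aut(G)$ satisfies $R(\phi) = \prod_{i=1}^{n} R(\phi_{i})$, with the convention that such a product of (extended) natural numbers equals $\infty$ as soon as one of the factors is. Granting this identity, both assertions of the lemma fall out immediately.

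For the identity itself, I would observe that with respect to a product endomorphism twisted conjugacy is tested componentwise. Writing elements of $G$ as tuples $x = (x_{1}, \ldots, x_{n})$, for $x, y \in G$ we have $x \Rconj{\phi} y$ iff there is $z = (z_{1}, \ldots, z_{n}) \in G$ with $x = z y \inv{\phi(z)}$; since both the group operation and $\phi$ act coordinatewise, this is equivalent to $x_{i} = z_{i} y_{i} \inv{\phi_{i}(z_{i})}$ for every $i$, i.e. to $x_{i} \Rconj{\phi_{i}} y_{i}$ for all $i$. Hence the $\phi$-twisted conjugacy class of $x$ is exactly the product of the $\phi_{i}$-twisted conjugacy classes of the $x_{i}$, and taking the product of representatives yields a bijection $\Times_{i=1}^{n} \left( G_{i} / \Rconj{\phi_{i}} \right) \to G / \Rconj{\phi}$. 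Counting the two sides gives $R(\phi) = \prod_{i=1}^{n} R(\phi_{i})$.

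From here the first inclusion is immediate: given $r_{i} \in \SpecR(G_{i})$, pick $\phi_{i} \in \Aut(G_{i})$ with $R(\phi_{i}) = r_{i}$; then $\Times_{i=1}^{n} \phi_{i} \in \Aut(G)$ realises the value $\prod_{i=1}^{n} r_{i}$, so $\prod_{i=1}^{n} r_{i} \in \SpecR(G)$. For the equality statement, assume $\Aut(G) = \Times_{i=1}^{n} \Aut(G_{i})$. Then every $\psi \in \Aut(G)$ has the form $\Times_{i=1}^{n} \psi_{i}$ for suitable $\psi_{i} \in \Aut(G_{i})$, whence $R(\psi) = \prod_{i=1}^{n} R(\psi_{i}) \in \prod_{i=1}^{n} \SpecR(G_{i})$, giving the reverse inclusion.

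There is no real obstacle here; the only care needed is bookkeeping around the infinite case — checking that the product-of-classes bijection still delivers the stated equality when some $R(\phi_{i}) = \infty$ — and noting that the componentwise description of $\Rconj{\phi}$ passes cleanly from a statement about the relations to the corresponding statement about the sets of equivalence classes. In the setting where this lemma is applied all groups are finite, so these subtleties are vacuous there.
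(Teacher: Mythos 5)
Your proof is correct: the observation that $\phi$-twisted conjugacy for a coordinatewise automorphism $\phi = \Times_{i=1}^{n}\phi_{i}$ is tested componentwise yields $R(\phi) = \prod_{i=1}^{n} R(\phi_{i})$, and both the inclusion and the conditional equality follow at once. The paper itself gives no proof of this lemma (it cites \cite[Corollary~2.6]{Senden21}); your argument is the standard one that such a reference supplies, so there is nothing to flag beyond the care you already take with infinite factors.
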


The following is well-known.
\begin{prop}
	Let \(G = G_{1} \times \ldots \times G_{n}\) be a direct product of finite groups such that \(\gcd(\size{G_{i}}, \size{G_{j}}) = 1\) for \(i \ne j\). Then
	\[
		\Aut(G) = \Times_{i = 1}^{n} \Aut(G_{i}).
	\]
\end{prop}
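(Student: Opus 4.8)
The plan is to reduce everything to the statement that, under the coprimality hypothesis, each factor \(G_{i}\) is a characteristic subgroup of \(G\). There is an evident injective homomorphism
\[
	\Times_{i = 1}^{n} \Aut(G_{i}) \longrightarrow \Aut(G), \qquad (\phi_{1}, \ldots, \phi_{n}) \longmapsto \phi_{1} \times \cdots \times \phi_{n},
\]
so the entire content of the proposition is the surjectivity of this map.

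First I would identify \(G_{i}\) with its canonical copy \(1 \times \cdots \times G_{i} \times \cdots \times 1\) in \(G\) and establish the key description
\[
	G_{i} = \{ g \in G \mid g^{\size{G_{i}}} = 1 \}.
\]
The inclusion \(\subseteq\) is Lagrange's theorem. For the reverse inclusion, write \(g = (g_{1}, \ldots, g_{n})\); the hypothesis \(g^{\size{G_{i}}} = 1\) gives \(g_{j}^{\size{G_{i}}} = 1\) for all \(j\), so for \(j \ne i\) the order of \(g_{j}\) divides \(\gcd(\size{G_{i}}, \size{G_{j}}) = 1\), forcing \(g_{j} = 1\) and hence \(g \in G_{i}\).

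The right-hand side of this description is visibly preserved by every automorphism of \(G\), since automorphisms respect the relation \(x^{m} = 1\); thus each \(G_{i}\) is characteristic. Consequently an arbitrary \(\phi \in \Aut(G)\) restricts to \(\phi_{i} := \restr{\phi}{G_{i}} \in \Aut(G_{i})\), and writing a general element as an internal product \(g = g_{1} \cdots g_{n}\) with \(g_{i} \in G_{i}\) yields \(\phi(g) = \phi_{1}(g_{1}) \cdots \phi_{n}(g_{n})\), i.e.\ \(\phi = \phi_{1} \times \cdots \times \phi_{n}\). This shows \(\phi\) lies in the image of the map above, which is what remained to be proven.

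I do not anticipate any real obstacle: the single delicate point is the order-theoretic characterisation of \(G_{i}\), and it is exactly there that the coprimality of the orders \(\size{G_{i}}\) is used. One could instead run an induction on \(n\), reducing to \(n = 2\) and invoking that a normal Hall subgroup of a finite group is unique, hence characteristic; but the direct computation above seems cleanest.
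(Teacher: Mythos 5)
Your proof is correct. Note that the paper itself offers no proof of this proposition at all --- it is stated as ``well-known'' without argument --- so there is nothing to compare against; your argument (characterising each \(G_{i}\) as \(\{g \in G \mid g^{\size{G_{i}}} = 1\}\) via the coprimality hypothesis, concluding that each factor is characteristic, and then splitting an arbitrary automorphism along the internal direct product decomposition) is the standard one and is complete, including the correct use of Lagrange plus the \(\gcd\) condition to kill the components \(g_{j}\) with \(j \ne i\).
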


\begin{cor}	\label{cor:ReidemeisterSpectrumCoprimeGroups}
	Let \(G = G_{1} \times \ldots \times G_{n}\) be a direct product of finite groups such that \(\gcd(\size{G_{i}}, \size{G_{j}}) = 1\) for \(i \ne j\). Then
	\[
		\SpecR(G) = \prod_{i = 1}^{n} \SpecR(G_{i}).
	\]
\end{cor}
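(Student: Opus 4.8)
The plan is to read this corollary off directly from the two results stated immediately before it, with essentially no computation. First I would note that the hypothesis ``\(\gcd(\size{G_{i}}, \size{G_{j}}) = 1\) for \(i \ne j\)'' is exactly the hypothesis of the preceding proposition, which therefore yields the factorisation \(\Aut(G) = \Times_{i = 1}^{n} \Aut(G_{i})\).

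Second, I would feed this identity into \Cref{lem:ReidemeisterNumbersOfAut(G1)xAut(Gn)}. That lemma gives the inclusion \(\prod_{i = 1}^{n} \SpecR(G_{i}) \subseteq \SpecR(G)\) unconditionally, and its second sentence upgrades the inclusion to an equality precisely when \(\Aut(G) = \Times_{i = 1}^{n} \Aut(G_{i})\) — a condition we have just verified. Hence \(\SpecR(G) = \prod_{i = 1}^{n} \SpecR(G_{i})\), as wanted.

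There is no genuine obstacle here; the only thing requiring a moment's attention is the interface between the two cited statements, namely why a factorisation of \(\Aut(G)\) forces a multiplicative behaviour of Reidemeister numbers. Concretely, if \(\psi \in \Aut(G)\) splits as \(\psi = \Times_{i = 1}^{n} \psi_{i}\) with \(\psi_{i} \in \Aut(G_{i})\), then each \(\psi\)-twisted conjugacy class in \(G\) is a product of \(\psi_{i}\)-twisted conjugacy classes in the factors, so \(R(\psi) = \prod_{i = 1}^{n} R(\psi_{i})\); but this is exactly the content already packaged inside \Cref{lem:ReidemeisterNumbersOfAut(G1)xAut(Gn)}, so it need not be reproved. (In the abelian case of interest later, \Cref{prop:ReidemeisterNumberEqualsNumberOfFixedPoints} together with \(\Fix(\psi) = \Times_{i=1}^{n} \Fix(\psi_{i})\) gives the same conclusion, but invoking the lemma keeps the statement at the level of arbitrary finite groups.)
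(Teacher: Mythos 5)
Your proposal is correct and is exactly the argument the paper intends: the corollary is stated without proof precisely because it follows by combining the preceding proposition (which gives \(\Aut(G) = \Times_{i=1}^{n} \Aut(G_{i})\) under the coprimality hypothesis) with the equality clause of \cref{lem:ReidemeisterNumbersOfAut(G1)xAut(Gn)}. Your additional remark on why the factorisation of \(\Aut(G)\) yields multiplicativity of Reidemeister numbers is accurate but, as you note, already packaged in the cited lemma.
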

Since each finite abelian group \(A\) admits a unique decomposition of the form
\[
	A = \bigoplus_{p \in \P} A(p),
\]
where \(\P\) is the set of all primes and \(A(p)\) is the Sylow \(p\)-subgroup of \(A\), it is sufficient to determine the Reidemeister spectrum of finite abelian \(p\)-groups to completely determine the Reidemeister spectrum of finite abelian groups. For odd prime numbers, this is straightforward. For \(p = 2\), on the other hand, the situation is much more complicated, both the Reidemeister spectrum itself and the proof.

We write the cyclic group of order \(n\) as \(\ZnZ\) and write abelian groups additively.
\begin{lemma}	\label{lem:ReidemeisterNumberCyclicGroups}
	Let \(n \geq 2\) and let \(\phi \in \End(\ZnZ)\) be given by \(\phi(1) = k\). Then \(R(\phi) = \gcd(k - 1, n)\).
\end{lemma}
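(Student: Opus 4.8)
The plan is to apply the preceding proposition, which tells us that $R(\phi) = \size{\Fix(\phi)}$, so it suffices to count the fixed points of $\phi$ in $\ZnZ$. An element $x \in \ZnZ$ is fixed by $\phi$ precisely when $\phi(x) = x$, and since $\phi(1) = k$ we have $\phi(x) = kx$ for all $x$; thus $x \in \Fix(\phi)$ if and only if $kx \equiv x \pmod{n}$, i.e.\ $(k-1)x \equiv 0 \pmod{n}$. The number of fixed points is therefore the number of solutions modulo $n$ of the linear congruence $(k-1)x \equiv 0 \pmod{n}$.

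The key step is the standard fact that for any integer $a$, the congruence $ax \equiv 0 \pmod n$ has exactly $\gcd(a, n)$ solutions in $\ZnZ$. One way to see this: the set of solutions is the kernel of the endomorphism $\mu_a \colon \ZnZ \to \ZnZ$ given by multiplication by $a$, whose image is the subgroup generated by $\gcd(a,n)$, hence of order $n / \gcd(a,n)$; by the first isomorphism theorem the kernel has order $\gcd(a,n)$. Applying this with $a = k - 1$ gives $\size{\Fix(\phi)} = \gcd(k-1, n)$, and combining with $R(\phi) = \size{\Fix(\phi)}$ yields $R(\phi) = \gcd(k-1, n)$.

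There is essentially no obstacle here — the statement is an immediate consequence of Proposition~\ref{prop:ReidemeisterNumberEqualsNumberOfFixedPoints} together with the elementary count of solutions to a linear congruence; the only care needed is to note that $\End(\ZnZ)$ is exactly the multiplication maps, so that describing $\phi$ by the single value $\phi(1) = k$ is legitimate.
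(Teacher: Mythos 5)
Your proposal is correct and follows essentially the same route as the paper: both reduce to counting fixed points via Proposition~\ref{prop:ReidemeisterNumberEqualsNumberOfFixedPoints} and then count the solutions of \((k-1)x \equiv 0 \bmod n\), the only cosmetic difference being that you obtain the count \(\gcd(k-1,n)\) via the first isomorphism theorem applied to multiplication by \(k-1\), whereas the paper enumerates the solutions directly as the multiples of \(n/\gcd(k-1,n)\).
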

\begin{proof}
	Since \(R(\phi) = \size{\Fix(\phi)}\), we determine the fixed points of \(\phi\). We have that \(\phi(i) = i\) if and only if \(i \cdot (k - 1) \equiv 0 \bmod n\). Writing \(d = \gcd(k - 1, n)\), we see that \(\frac{k - 1}{d}\) is invertible modulo \(n\), hence \(i \cdot (k - 1) \equiv 0 \bmod n\) if and only if \(i \cdot d \equiv 0 \bmod n\). Thus, for \(i \cdot d \equiv 0 \bmod n\) to hold, \(i\) must be a multiple of \(\frac{n}{d}\). Since \(i\) has to lie between \(0\) and \(n - 1\) and there are \(d\) multiples of \(\frac{n}{d}\) lying between \(0\) and \(n - 1\), \(\phi\) has \(d\) fixed points.	
\end{proof}
We end with a general description of automorphisms of finite abelian \(p\)-groups, which was proven by C.\ Hillar and D.\ Rhea \cite{HillarRhea07}.
\begin{defin}
	Let \(n\) be a positive integer. We define \(E(n)\) to be
	\[
		E(n) = \{(e_{1}, \ldots, e_{n}) \in \Z^{n} \mid \forall i \in \{1, \ldots, n - 1\}: 1 \leq e_{i} \leq e_{i + 1}\}.
	\]
	
	Given a prime \(p\) and \(e \in E(n)\), we define the \emph{abelian \(p\)-group of type \(e\)} to be the group
	\[
		P_{p, e} = \bigoplus_{i = 1}^{n} \ZmodZ{p^{e_{i}}}.
	\]
\end{defin}
By the fundamental theorem of finite abelian groups, we know that for each finite abelian \(p\)-group \(P\) there exists an \(n \geq 1\) and \(e \in E(n)\) such that \(P \cong P_{p, e}\). We say that \(P\) is \emph{of type \(e\)}. We write \(\pi_{P}: \Z^{n} \to P\) for the natural projection. If \(P\) is clear from the context, we omit the subscript and simply write \(\pi\). Given \(\pi\), we write elements in \(\Z^{n}\) as column vectors \(\transpose{x}\).
\begin{theorem}[{\cite[Theorems 3.3 \& 3.6]{HillarRhea07}}]	\label{theo:AutomorphismGroupAbelianPGroup}
	Let \(P\) be a finite abelian \(p\)-group of type \(e\). Put \(A(P) = \{M \in \Z^{n \times n} \mid \forall j \leq i \in \{1, \ldots, n\}: p^{e_{i} - e_{j}} \mid M_{ij}\}\) and let \(\pi: \Z^{n} \to P\) be the natural projection. Define \(\Psi: A(P) \to \End(P): M \mapsto \Psi(M)\) where
	\[
		\Psi(M): P \to P: \pi(\transpose{x}) \mapsto \Psi(M)(\pi(\transpose{x})) := \pi(M \transpose{x}).
	\]
	Then \(A(P)\) is a ring under the usual matrix operations, \(\Psi\) is a well-defined ring morphism and \(\Aut(P)\) is precisely the image of \(\{M \in A(P) \mid M \bmod p \in \GL(n, \ZpZ)\}\) under \(\Psi\).
\end{theorem}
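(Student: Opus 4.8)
The plan is to realise $P = \bigoplus_{i=1}^{n} \ZmodZ{p^{e_i}}$ through generators and to translate the two requirements "$M$ induces a well-defined map" and "$\Psi(M)$ is bijective" into the stated conditions on $M$. Write $g_i \in P$ for the image under $\pi$ of the $i$-th standard basis vector of $\Z^{n}$, so that $g_1, \ldots, g_n$ generate $P$, $g_i$ has order $p^{e_i}$, and $\ker \pi = \bigoplus_{i} p^{e_i}\Z$. For any integer matrix $M$, the assignment $\transpose{x} \mapsto \pi(M\transpose{x})$ descends to a map $P \to P$ precisely when $M(\ker\pi) \subseteq \ker\pi$; reading this off componentwise it says $p^{e_i} \mid p^{e_j}M_{ij}$ for all $i, j$, which is automatic unless $e_i > e_j$ and then amounts to $p^{e_i - e_j} \mid M_{ij}$. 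Since $e_1 \leq \cdots \leq e_n$, the pairs with $e_i > e_j$ are among those with $j \leq i$, and the condition $p^{e_i - e_j}\mid M_{ij}$ is vacuous when $e_i = e_j$; hence these matrices are exactly $A(P)$.

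Next I would check that $A(P)$ is a ring and $\Psi$ a ring morphism onto $\End(P)$. Additive closure of $A(P)$ and the identities $\Psi(M+N) = \Psi(M)+\Psi(N)$, $\Psi(MN) = \Psi(M)\Psi(N)$, $\Psi(I) = \Id$ are formal. Multiplicative closure requires expanding $(MN)_{ij} = \sum_{k} M_{ik}N_{kj}$ and verifying, for $j \leq i$, that $p^{e_i - e_j}$ divides each summand: one distinguishes whether $k$ lies below $j$, between $j$ and $i$, or above $i$, and combines $p^{e_i - e_k}\mid M_{ik}$ (when $k \leq i$) with $p^{e_k - e_j}\mid N_{kj}$ (when $j \leq k$), using monotonicity of the $e_\ell$ to absorb the missing factor in the remaining cases. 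For surjectivity of $\Psi$ onto $\End(P)$, given $\phi \in \End(P)$ I would pick integers $M_{ij}$ with $\phi(g_j) = \sum_i M_{ij}g_i$; the fact that $\phi(g_j)$ has order dividing $p^{e_j}$ forces $p^{e_i}\mid p^{e_j}M_{ij}$, so $M \in A(P)$, and by construction $\Psi(M) = \phi$.

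It then remains to identify which $M \in A(P)$ give automorphisms. Since $P$ is finite, $\Psi(M) \in \Aut(P)$ iff $\Psi(M)$ is surjective. Because every $e_i \geq 1$, one has $P/pP \cong (\ZpZ)^{n}$ with the $g_i + pP$ forming the standard basis, and $\Psi(M)$ induces on $P/pP$ the map $x \mapsto (M \bmod p)\,x$. If $\Psi(M)$ is surjective, so is its reduction, whence $M \bmod p \in \GL(n, \ZpZ)$. Conversely, if $M \bmod p \in \GL(n, \ZpZ)$ then $\Psi(M)(P) + pP = P$; setting $Q = \Psi(M)(P)$, the finite $p$-group $P/Q$ satisfies $p\cdot(P/Q) = (pP + Q)/Q = P/Q$, forcing $P/Q = 0$, so $\Psi(M)$ is surjective. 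Combined with the previous step this gives $\Aut(P) = \Psi\bigl(\{M \in A(P) \mid M \bmod p \in \GL(n, \ZpZ)\}\bigr)$.

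I expect the only genuinely delicate point to be the multiplicative closure of $A(P)$ in the second step — the case analysis on the position of the summation index relative to $i$ and $j$, together with the careful use of the ordering $e_1 \leq \cdots \leq e_n$. The characterisation of the automorphisms in the last step is a routine Nakayama-type argument once one passes to the quotient $P/pP$, and the well-definedness and surjectivity statements are direct order computations.
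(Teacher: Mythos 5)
This theorem is imported into the paper verbatim from Hillar and Rhea \cite[Theorems 3.3 \& 3.6]{HillarRhea07}; the paper gives no proof of its own, so there is nothing internal to compare against. Your argument is correct and complete as a self-contained proof: the identification of $A(P)$ with $\{M : M(\ker\pi)\subseteq\ker\pi\}$ via the componentwise divisibilities $p^{e_i}\mid p^{e_j}M_{ij}$, the three-way case split on the summation index for multiplicative closure, the surjectivity of $\Psi$ onto $\End(P)$ by lifting $\phi(g_j)=\sum_i M_{ij}g_i$ and using that $\phi(g_j)$ has order dividing $p^{e_j}$, and the reduction to $P/pP$ for the automorphism criterion are all sound (the last step also matches \cref{lem:InducedMatrixOnExponentpQuotient} of the paper). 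The one place where your route differs from the cited source is the final characterisation: Hillar and Rhea argue essentially by exhibiting an explicit inverse matrix in $A(P)$ (via the determinant and adjugate, which must be checked to again satisfy the divisibility constraints), whereas you avoid any inverse construction by observing that surjectivity of $\Psi(M)$ on the finite group $P$ follows from surjectivity modulo $p$ by a Nakayama-type argument. Your version is shorter and sidesteps the verification that the adjugate lies in $A(P)$, at the cost of not producing the inverse explicitly; for the purposes of this paper either suffices.
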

If \(\phi \in \Aut(P)\) is the image of \(M\) under \(\Psi\), we say that \(\phi\) is \emph{represented by \(M\)}.

For any abelian \(p\)-group \(P\) of type \(e \in E(n)\), the quotient group \(P / pP\) is an abelian group of exponent \(p\), hence it carries a \(\ZpZ\)-vector space structure. Note that the type of \(P / pP\) is then given by the all ones vector of length \(n\).
\begin{lemma}	\label{lem:InducedMatrixOnExponentpQuotient}
	Let \(P\) be a finite abelian \(p\)-group of type \(e\). Let \(\phi \in \Aut(P)\) be represented by \(M\). Let \(\transpose{z}_{i} \in \Z^{n}\) be the vector with a \(1\) on the \(i\)th place and zeroes elsewhere and let \(\rho: \Z^{n} \to P / pP\) be the projection. If we view \(P / pP\) as a vector space over \(\ZpZ\), the matrix representation of the induced automorphism on \(P / pP\) with respect to the basis \(\{\rho(\transpose{z}_{1}), \ldots, \rho(\transpose{z}_{n})\}\), is the matrix \(M \bmod p \in \GL(n, \ZpZ)\).
\end{lemma}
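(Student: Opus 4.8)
The plan is to unwind the definitions and simply track where the chosen basis vectors go under the induced map. First I would record the basic compatibilities: since \(\phi(px) = p\phi(x)\) for all \(x \in P\), we have \(\phi(pP) \subseteq pP\), so \(\phi\) descends to an automorphism \(\bar{\phi}\) of \(P / pP\); write \(q \colon P \to P / pP\) for the quotient map. The crucial point is that the projection \(\rho \colon \Z^{n} \to P / pP\) factors as \(\rho = q \circ \pi\): indeed \(\pi\) sends \(\transpose{z}_{i}\) to a generator of the \(i\)th cyclic summand \(\ZmodZ{p^{e_{i}}}\) of \(P\), and \(q\) sends that generator to a generator of the \(i\)th summand \(\ZpZ\) of \(P / pP\), so both sides of \(\rho = q \circ \pi\) agree on the standard generators \(\transpose{z}_{i}\) of \(\Z^{n}\) and hence everywhere.

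Next I would check that \(\{\rho(\transpose{z}_{1}), \ldots, \rho(\transpose{z}_{n})\}\) really is a basis of \(P / pP\) over \(\ZpZ\): the elements \(\pi(\transpose{z}_{i})\) generate \(P\), so their images \(\rho(\transpose{z}_{i})\) generate \(P / pP\); since \(P\) has type \(e \in E(n)\), the quotient \(P / pP\) is isomorphic to \(\ZpZ^{n}\) and thus has dimension \(n\), so any \(n\) generators form a basis.

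With these in hand the computation is immediate. Fix \(j \in \{1, \ldots, n\}\). Using that \(\phi\) is represented by \(M\), i.e.\ \(\phi(\pi(\transpose{x})) = \pi(M\transpose{x})\) by \cref{theo:AutomorphismGroupAbelianPGroup}, and that \(M\transpose{z}_{j}\) is the \(j\)th column \(\transpose{(M_{1j}, \ldots, M_{nj})}\) of \(M\), we get
\[
	\bar{\phi}(\rho(\transpose{z}_{j})) = q(\phi(\pi(\transpose{z}_{j}))) = q(\pi(M\transpose{z}_{j})) = \rho(M\transpose{z}_{j}) = \sum_{i = 1}^{n} M_{ij}\, \rho(\transpose{z}_{i}) = \sum_{i = 1}^{n} (M_{ij} \bmod p)\, \rho(\transpose{z}_{i}),
\]
where the last equality uses \(p\, \rho(\transpose{z}_{i}) = 0\). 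Hence the \((i, j)\)-entry of the matrix of \(\bar{\phi}\) with respect to \(\{\rho(\transpose{z}_{1}), \ldots, \rho(\transpose{z}_{n})\}\) is \(M_{ij} \bmod p\); that is, the matrix is \(M \bmod p\), which lies in \(\GL(n, \ZpZ)\) again by \cref{theo:AutomorphismGroupAbelianPGroup}.

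I do not expect a genuine obstacle here: the statement is essentially bookkeeping. The only two points that deserve a moment's care are the factorisation \(\rho = q \circ \pi\) (so that the abstract ``natural projection'' onto \(P / pP\) agrees with reducing \(\pi\) modulo \(p\)) and the verification that the candidate family is a basis rather than merely a generating set; both are elementary.
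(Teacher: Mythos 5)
Your proposal is correct and follows essentially the same route as the paper: factor the projection \(\rho\) as the quotient map \(P \to P/pP\) composed with \(\pi\), then apply the defining relation \(\phi(\pi(\transpose{x})) = \pi(M\transpose{x})\) to the standard generators and reduce modulo \(p\). The extra checks you include (that the family \(\{\rho(\transpose{z}_{i})\}\) is indeed a basis, and the explicit column expansion) are sound and merely make explicit what the paper leaves implicit.
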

\begin{proof}
	Let \(\bar{\pi}: P \to P / pP\) be the projection and let \(\bar{\phi}\) be the induced automorphism on \(P / pP\). Then \(\bar{\pi} \circ \pi: \Z^{n} \to P / pP\) is the natural projection from \(\Z^{n}\) onto \(P / pP\), therefore, \(\rho = \bar{\pi} \circ \pi\). We also have that
	\[
		\bar{\phi}(\bar{\pi}(\pi(\transpose{x}))) = \bar{\pi}(\phi(\pi(\transpose{x}))) = \bar{\pi}(\pi(M\transpose{x})).
	\]
	Now, this implies that
	\[
		\bar{\phi}(\rho(\transpose{z}_{i})) = \rho(M\transpose{z}_{i}),
	\]
	showing that \(M \bmod p\) is the matrix representation of \(\bar{\phi}\).
\end{proof}


\section{Reidemeister spectrum of finite abelian \(p\)-groups with \(p\) odd prime}
For \(p\) an odd prime, the computation of the Reidemeister spectrum of a finite abelian \(p\)-group of type \(e\) is a straightforward application of \cref{lem:ReidemeisterNumberCyclicGroups,lem:ReidemeisterNumbersOfAut(G1)xAut(Gn)}.
\begin{lemma}	\label{lem:SpecROddCyclicpGroup}
	Let \(p\) be an odd prime and \(n \geq 1\) a natural number. Then
	\[
		\SpecR(\ZmodZ{p^{n}}) = \{p^{i} \mid i \in \{0, \ldots, n\}\}.
	\]
\end{lemma}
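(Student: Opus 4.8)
The plan is to prove the two inclusions separately, the first being immediate and the second amounting to a single explicit family of automorphisms.

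For $\SpecR(\ZmodZ{p^n}) \subseteq \{p^i \mid i \in \{0, \ldots, n\}\}$ I would simply invoke \cref{cor:SpecRFiniteAbelianGroupsConsistsOfDivisors}: every Reidemeister number of an automorphism of $\ZmodZ{p^n}$ divides $p^n$, and the divisors of $p^n$ are exactly $p^0, p^1, \ldots, p^n$.

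For the reverse inclusion, for each $i \in \{0, \ldots, n\}$ I would consider the endomorphism $\phi_i$ of $\ZmodZ{p^n}$ determined by $\phi_i(1) = 1 + p^i$. First I would check that $\phi_i$ is an automorphism: since $\End(\ZmodZ{p^n}) \cong \ZmodZ{p^n}$ as a ring, with the automorphisms corresponding to the units (equivalently, this is \cref{theo:AutomorphismGroupAbelianPGroup} in the rank-one case), this holds iff $1 + p^i$ is coprime to $p$. For $i \geq 1$ we have $1 + p^i \equiv 1 \pmod p$, and for $i = 0$ we have $1 + p^0 = 2$, which is coprime to $p$ because $p$ is odd. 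Then \cref{lem:ReidemeisterNumberCyclicGroups} yields
\[
	R(\phi_i) = \gcd\bigl((1 + p^i) - 1, p^n\bigr) = \gcd(p^i, p^n) = p^i
\]
as $i \leq n$, so $p^i \in \SpecR(\ZmodZ{p^n})$, and together with the previous paragraph this gives the claimed equality.

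I do not expect any genuine obstacle here: the statement is essentially a corollary of \cref{lem:ReidemeisterNumberCyclicGroups}. The only subtlety, and the only place where the hypothesis ``$p$ odd'' is used, is that $1 + p^0 = 2$ must be a unit modulo $p^n$ in order to realise the value $R = 1$; this already fails for $p = 2$, where any automorphism $x \mapsto kx$ has $k$ odd and hence fixes $2^{n-1}$, so that $R(\phi) \geq 2$ and $1 \notin \SpecR(\ZmodZ{2^n})$ for $n \geq 1$ — which is a first indication of why the $2$-primary case in \S 4 is genuinely harder.
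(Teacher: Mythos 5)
Your proposal is correct and follows essentially the same route as the paper: both inclusions are handled identically, with the upper bound coming from \cref{cor:SpecRFiniteAbelianGroupsConsistsOfDivisors} and the lower bound realised by the automorphisms \(1 \mapsto p^{i} + 1\) via \cref{lem:ReidemeisterNumberCyclicGroups}. Your explicit remark that the case \(i = 0\) is exactly where the hypothesis that \(p\) is odd enters (since \(1 + p^{0} = 2\) must be a unit) makes the role of that hypothesis slightly more visible than in the paper's phrasing, but the argument is the same.
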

\begin{proof}
	The \(\subseteq\)-inclusion follows from \cref{cor:SpecRFiniteAbelianGroupsConsistsOfDivisors}. For the other inclusion, we use \cref{lem:ReidemeisterNumberCyclicGroups}. For \(i \in \{0, \ldots, n\}\), define \(\phi_{i}: \ZmodZ{p^{n}} \to \ZmodZ{p^{n}}: 1 \mapsto p^{i} + 1\). Since \(\gcd(p^{i} + 1, p) = 1\), as \(p\) is odd, \(\phi_{i}\) is an automorphism of \(\ZmodZ{p^{n}}\). By \cref{lem:ReidemeisterNumberCyclicGroups}, \(R(\phi_{i}) = p^{i}\).
\end{proof}
For \(e \in E(n)\) we put \(\Sigma(e) = \sum_{i = 1}^{n} e_{i}\).
\begin{prop}	\label{prop:SpecRFiniteAbelianOddPgroups}
	Let \(p\) be an odd prime and \(P\) a finite abelian \(p\)-group of type \(e \in E(n)\). Then
	\[
		\SpecR(P) = \{p^{i} \mid i \in \{0, \ldots, \Sigma(e)\}\}.
	\]
\end{prop}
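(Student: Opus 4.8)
The plan is to prove the two inclusions separately. For the \(\subseteq\)-inclusion, note that \(\size{P} = p^{\Sigma(e)}\), so by \cref{cor:SpecRFiniteAbelianGroupsConsistsOfDivisors} every element of \(\SpecR(P)\) is a divisor of \(p^{\Sigma(e)}\), hence of the form \(p^{i}\) with \(0 \leq i \leq \Sigma(e)\). This gives one inclusion immediately.

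For the \(\supseteq\)-inclusion, I would apply the first (easy) half of \cref{lem:ReidemeisterNumbersOfAut(G1)xAut(Gn)} to the decomposition \(P = \bigoplus_{i = 1}^{n} \ZmodZ{p^{e_{i}}}\), which yields \(\prod_{i = 1}^{n} \SpecR(\ZmodZ{p^{e_{i}}}) \subseteq \SpecR(P)\); crucially, we do not need the equality clause of that lemma, so it does not matter that \(\Aut(P)\) is in general strictly larger than the product of the automorphism groups of the cyclic factors (these factors do not have coprime order). By \cref{lem:SpecROddCyclicpGroup}, \(\SpecR(\ZmodZ{p^{e_{i}}}) = \{p^{j} \mid 0 \leq j \leq e_{i}\}\), so the product set on the left is \(\{p^{j_{1} + \cdots + j_{n}} \mid 0 \leq j_{i} \leq e_{i} \text{ for all } i\}\).

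It then remains to record the elementary combinatorial fact that every integer \(k\) with \(0 \leq k \leq \Sigma(e) = e_{1} + \cdots + e_{n}\) can be written as \(k = j_{1} + \cdots + j_{n}\) with \(0 \leq j_{i} \leq e_{i}\): one can, for instance, choose the \(j_{i}\) greedily, setting \(j_{1} = \min(k, e_{1})\), then \(j_{2} = \min(k - j_{1}, e_{2})\), and so on. Consequently the product set equals \(\{p^{k} \mid 0 \leq k \leq \Sigma(e)\}\), which combined with the first inclusion gives the claimed equality.

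I do not anticipate a genuine obstacle here; the statement is a straightforward synthesis of \cref{lem:SpecROddCyclicpGroup} and \cref{lem:ReidemeisterNumbersOfAut(G1)xAut(Gn)}. The only point requiring a little care is to use only the inclusion part of the latter lemma, and to spell out the (trivial) reason why sums \(j_{1} + \cdots + j_{n}\) with the prescribed bounds sweep out every value from \(0\) to \(\Sigma(e)\).
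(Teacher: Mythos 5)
Your proposal is correct and follows essentially the same route as the paper: the \(\subseteq\)-inclusion from \cref{cor:SpecRFiniteAbelianGroupsConsistsOfDivisors}, and the \(\supseteq\)-inclusion by combining the inclusion half of \cref{lem:ReidemeisterNumbersOfAut(G1)xAut(Gn)} with \cref{lem:SpecROddCyclicpGroup} on the cyclic factors. Your greedy choice of the exponents \(j_{i}\) is exactly the explicit construction the paper uses (filling the first factors completely and putting the remainder in the next one).
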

\begin{proof}
	By \cref{cor:SpecRFiniteAbelianGroupsConsistsOfDivisors}, we only have to prove the \(\supseteq\)-inclusion, and is essentially \cref{lem:ReidemeisterNumbersOfAut(G1)xAut(Gn)}. Let \(m \in \{0, \ldots, \Sigma(e)\}\) and let \(j \in \{1, \ldots, n + 1\}\) be the (unique) index such that
	\[
		\sum_{l = 1}^{j - 1} e_{l} \leq m < \sum_{l = 1}^{j} e_{l},
	\]
	where we put \(e_{n + 1} := \infty\) for convenience. By \cref{lem:SpecROddCyclicpGroup}, there are automorphisms \(\phi_{i}\) of \(\ZmodZ{p^{e_{i}}}\) such that
	\[
		R(\phi_{i}) = \begin{cases}
			p^{e_{i}}	&\mbox{ if \(i \leq j - 1\)},	\\
			p^{m - \sum_{l = 1}^{j - 1}e_{l}}	&\mbox{ if \(i = j\)},	\\
			1	&\mbox{ if \(i > j\)}.
		\end{cases}
	\]
	Then \(\phi :=(\phi_{1}, \ldots, \phi_{n})\) is an automorphism of \(P\) and
	\[
		R(\phi) = \prod_{i = 1}^{n} R(\phi_{i}) = p^{\sum_{l = 1}^{j - 1}e_{l} + m - \sum_{l = 1}^{j - 1}e_{l}} = p^{m}.	\qedhere
	\]	
\end{proof}

\section{Fixed points on finite abelian \(p\)-groups}
In contrast to abelian \(p\)-groups for odd \(p\), the situation for abelian \(2\)-groups is much more involved. However, the behaviour of the Reidemeister numbers fits in a more general phenomenon concerning fixed points of automorphisms, valid for all prime numbers. Let \(p\) be a prime number, \(n \geq 1\) and \(e \in E(n)\). Let \(P := P_{p, e}\) be the finite abelian \(p\)-group of type \(e\). For \(i \in \Z\) coprime with \(p\), let \(\mu_{i}\) denote the automorphism of \(P\) given by \(\mu_{i}(x) = ix\). For \(\phi \in \Aut(P)\), we then define
\[
	\Pi(\phi) = \prod_{i = 1}^{p - 1} \size{\Fix(\mu_{i} \circ \phi)}.
\]
Finally, we put \(\SpecP(P) = \{\Pi(\psi) \mid \psi \in \Aut(P)\}\). The goal is to fully determine \(\SpecP(P)\). Note that \(\Pi(\phi)\) is always a power of \(p\), hence \(\SpecP(P) \subseteq \{p^{i} \mid i \in \N\}\). If \(p = 2\), then
\[
	\Pi(\phi) = \size{\Fix(\mu_{1} \circ \phi)} = \size{\Fix(\phi)} = R(\phi)
\]
by \cref{prop:ReidemeisterNumberEqualsNumberOfFixedPoints}, hence \(\SpecP(P) = \SpecR(P)\) in that case. This shows that \(\SpecP(P)\) is a generalisation of \(\SpecR(P)\) of some sort.

\subsection{Lower bound}
We start by determining and proving a lower bound for \(\SpecP(P)\). Throughout this section, let \(p\) be a prime number, \(n \geq 1\) and \(e \in E(n)\). Let \(P := P_{p, e}\) be the finite abelian \(p\)-group of type \(e\). To formulate the lower bound, we construct a decomposition of \(e\).

\begin{defin}	\label{def:abcDecomposition}
	Given \(e \in E(n)\), we construct the \emph{\(abc\)-decomposition of \(e\)} into three types of blocks in the following way.
\begin{description}
	\item[Step 1:] Each maximal constant subsequence of \(e_{1}, \ldots, e_{n}\) of length at least \(2\) forms one block, which we call an \emph{\(a\)-block}.
	
	\item[Step 2:] Among the remaining numbers, we look for successive numbers \(e_{i}\) and \(e_{i + 1}\) such that \(e_{i + 1} = e_{i} + 1\), starting from the left. Each such pair forms one block, which we call a \emph{\(b\)-block}.
	
	\item[Step 3:] By Step 1 and Step 2, the remaining \(e_{i}\)'s are all distinct and differ at least \(2\) from each other. Each of these numbers forms one block, which we call a \emph{\(c\)-block}.
\end{description}
	We define \(a(e), b(e)\) and \(c(e)\) to be the number of \(a\)-, \(b\)- and \(c\)-blocks, respectively, in this decomposition.
\end{defin}
For instance, consider \(e = (1, 1, 2, 3, 4, 4, 6, 7, 8, 10, 12, 13)\). We go through the steps one by one and mark the blocks in \(e\). There are two \(a\)-blocks, namely \((1, 1)\) and \((4, 4)\), hence we get
\[
	((1, 1), 2, 3, (4, 4), 6, 7, 8, 10, 12, 13).
\]
Next, there are three \(b\)-blocks, namely \((2, 3), (6, 7)\) and \((12, 13)\), so we get
\[
	((1, 1), (2, 3), (4, 4), (6, 7), 8, 10, (12, 13)).
\]
The remaining elements, \(8\) and \(10\), each form a single \(c\)-block, yielding
\[
	((1, 1), (2, 3), (4, 4), (6, 7), (8), (10), (12, 13)).
\]

\begin{remark}
This construction implies that, if a \(b\)-block of the form \((e_{i}, e_{i} + 1)\) succeeds a \(c\)-block \((e_{i - 1})\), then \(e_{i} \geq e_{i - 1} + 2\), since we form the \(b\)-blocks by starting from the left.
\end{remark}


We now use this decomposition to formulate the lower bound of \(\SpecP(P)\).

\begin{theorem}	\label{theo:LowerBoundNumberOfFixedPointsAbelianPGroup}
	Let \(\phi \in \Aut(P)\). Then
	\(
		\Pi(\phi) \geq p^{b(e) + c(e)}.
	\)
\end{theorem}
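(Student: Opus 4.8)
The plan is to reduce the problem to a careful study of $\Fix(\mu_i\circ\phi)$ on each "block" of the $abc$-decomposition and to show that $b$-blocks and $c$-blocks each force at least one factor of $p$ to survive in $\Pi(\phi)=\prod_{i=1}^{p-1}\size{\Fix(\mu_i\circ\phi)}$, regardless of which automorphism $\phi$ we started with. The key structural observation is that $\phi$ is represented by a matrix $M\in A(P)$ with $M\bmod p\in\GL(n,\ZpZ)$ (by \cref{theo:AutomorphismGroupAbelianPGroup}), and the divisibility constraints defining $A(P)$ mean that $M$ is "block upper triangular modulo $p$" along the strata of equal $e_i$'s: entries $M_{ij}$ with $e_i>e_j$ are divisible by $p$. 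Hence $M\bmod p$ respects the flag coming from the distinct values appearing in $e$, and on the quotient $P/pP$ (using \cref{lem:InducedMatrixOnExponentpQuotient}) the induced automorphism $\bar\phi$ is block upper triangular with invertible diagonal blocks $D_1,\dots,D_k$ indexed by the distinct exponents, $D_s\in\GL(n_s,\ZpZ)$. I would then relate $\size{\Fix(\mu_i\circ\phi)}$ from below to $\size{\Fix(\mu_i\circ\bar\phi)}=\size{\ker(\mu_i\bar\phi-\Id)}=p^{\dim\ker(i\,(M\bmod p)-I)}$; more precisely, a fixed point of $\mu_i\circ\phi$ in $P$ that is not in $pP$ projects to a nonzero fixed point of $\mu_i\circ\bar\phi$, and conversely one can often lift, so the $p$-adic valuation of $\size{\Fix(\mu_i\circ\phi)}$ is controlled below by $\dim_{\ZpZ}\ker(i\,(M\bmod p)-I)$.

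Next I would analyse, block by block, the quantity $\sum_{i=1}^{p-1}\dim_{\ZpZ}\ker(i\,\bar\phi-\Id)$, which (up to the lifting issue) bounds $v_p(\Pi(\phi))$ from below. The point is purely linear-algebraic: for an invertible $D\in\GL(m,\ZpZ)$, consider $\sum_{i=1}^{p-1}\dim\ker(iD-I)=\sum_{\lambda\in\ZpZ^\times}\dim\ker(D-\lambda^{-1}I)$, which counts (with multiplicity, via generalised eigenspaces being at least the eigenspaces) the number of nonzero eigenvalues of $D$ in $\ZpZ$, summed appropriately. For an $a$-block of size $m\ge 2$ we can take $D$ with no eigenvalues in $\ZpZ^\times$ (e.g. a companion matrix of an irreducible polynomial of degree $m$ when $m\ge 2$, or more carefully a block with eigenvalues only in extensions), contributing $0$ — this is why $a$-blocks give nothing. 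For a $c$-block of size $1$, $D$ is a $1\times 1$ invertible matrix, i.e. a unit $\lambda$, and $\sum_{i=1}^{p-1}\dim\ker(i\lambda-1)=1$ always (exactly one $i$ inverts $\lambda$), forcing a guaranteed factor of $p$. For a $b$-block $(e_i,e_i+1)$ of size $2$: here the diagonal block of $M\bmod p$ is $2\times 2$ invertible, but the extra divisibility constraint $p\mid M_{i+1,i}$ (since $e_{i+1}-e_i=1\ge 1$) forces this $2\times 2$ block to be \emph{upper triangular} modulo $p$, hence to have two eigenvalues $\lambda_1,\lambda_2\in\ZpZ^\times$ in $\ZpZ$; summing over $i$, each eigenvalue contributes at least $1$, so a $b$-block contributes at least $1$ (actually the subtlety is that $\lambda_1=\lambda_2$ is allowed and still gives $\ge 1$; we only need $\ge 1$, not $\ge 2$). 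Summing over all blocks gives $v_p(\Pi(\phi))\ge b(e)+c(e)$.

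The main obstacle I anticipate is the lifting step: passing from $\dim\ker(i\,\bar\phi-\Id)$ on $P/pP$ back to $v_p\size{\Fix(\mu_i\circ\phi)}$ on $P$ is not an equality in general, and one must be careful that the lower bound on $P/pP$ genuinely transfers. I expect this to be handled by the following argument: if $v$ is a nonzero fixed vector of $\mu_i\circ\bar\phi$ coming from a diagonal block $D_s$ on the stratum of exponent value $\epsilon_s$, choose a preimage $x\in P$ supported on that stratum with $x\notin pP$; then $(\mu_i\phi-\Id)(x)\in pP$, and because the relevant diagonal block is invertible modulo $p$ one can iterate a Hensel-type correction within the cyclic summands $\ZmodZ{p^{\epsilon_s}}$ to produce an honest fixed point (or at least an element of order exceeding $p^{\epsilon_s-1}$ in $\Fix(\mu_i\circ\phi)$), so that distinct blocks and distinct eigenvalues contribute independent factors of $p$ to $\size{\Fix(\mu_i\circ\phi)}$. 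Organising this bookkeeping — ensuring the contributions from different $i\in\{1,\dots,p-1\}$ and different blocks do not "overlap" and are genuinely multiplied — is the technical heart; everything else is the linear algebra of triangular invertible matrices over $\ZpZ$ described above.
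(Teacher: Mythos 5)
There is a genuine gap, and it sits exactly where you locate ``the technical heart''. Your block-by-block count on \(P/pP\) is a count of \emph{algebraic} data (the diagonal blocks of the block upper triangular matrix \(M \bmod p\)), but \(\size{\Fix(\bar{\mu}_i\circ\bar{\phi})} = p^{\dim\ker(i(M\bmod p)-I)}\) is governed by \emph{geometric} multiplicities, and for a triangular matrix these can be strictly smaller than what the diagonal predicts. Concretely, take \(P = \ZmodZ{p}\oplus\ZmodZ{p^{3}}\) (two \(c\)-blocks, so the theorem claims \(\Pi(\phi)\geq p^{2}\)) and \(\phi\) represented by \(M = \begin{pmatrix}1 & 1\\ 0 & 1\end{pmatrix}\). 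On \(P/pP\) the induced map is a single Jordan block: \(\dim\ker(\bar{M}-I)=1\) and \(\ker(i\bar{M}-I)=0\) for \(i\neq 1\), so your method certifies only \(\Pi(\phi)\geq p\). (The theorem still holds here, since \(\size{\Fix(\phi)}=p^{3}\), but the extra factor lives inside \(pP\) and is invisible on \(P/pP\).) The same defect breaks your \(b\)-block step: an eigenvector of a lower diagonal block of a block upper triangular matrix need not extend to an eigenvector of the whole matrix, so ``each eigenvalue contributes at least \(1\)'' is unjustified. Your proposed Hensel-type repair does not address this, because the obstruction is not lifting a fixed point from \(P/pP\) to \(P\) --- that direction is free from \(R(\phi)\geq R(\bar{\phi})\) in \cref{lem:ReidemeisterNumbersExactSequence} --- it is that the required eigenvector fails to exist on \(P/pP\) in the first place, the culprit being the unconstrained entries \emph{above} the diagonal, which your stratification argument cannot touch.

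The paper supplies exactly the missing device: a characteristic subgroup \(Q = P(d_{1},\ldots,d_{n})\) whose exponents \(d_{i}\) increase by \(1\) precisely at the start of each \(b\)- and \(c\)-block, so that the induced automorphism of \(Q\) is represented by \(D^{-1}MD\) with \(D = \Diag(p^{d_{1}},\ldots,p^{d_{n}})\). The properties of \(d(e)\) (\cref{lem:Propertiesd(e)}) force \emph{every} off-diagonal entry of \(D^{-1}MD\) in a column indexed by the first element of a \(b\)- or \(c\)-block to become divisible by \(p\) (\cref{lem:MatrixRepresentationModpOnQ}), above the diagonal as well as below. Those standard basis vectors are then honest eigenvectors of the induced map on \(Q/pQ\) with unit eigenvalues, distinct indices contribute independent kernel dimensions, and the bound transfers to \(\size{\Fix(\mu_i\circ\phi)}\) via the restriction and quotient inequalities of \cref{lem:ReidemeisterNumbersExactSequence}. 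In the example above, \(d(e)=(0,1)\) turns \(M\) into \(\begin{pmatrix}1 & p\\ 0 & 1\end{pmatrix}\), whose reduction is the identity, recovering the factor \(p^{2}\). If you want to keep your framework, you must build in this rescaling (or an equivalent mechanism converting the diagonal-entry information into actual eigenvectors) before doing the linear algebra; as written, the argument proves only a weaker bound.
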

The remainder of this section is devoted to proving this theorem. To do so, we will construct a suitable characteristic subgroup of \(P\). This subgroup is of the following form.
\begin{defin}
	For non-negative integers \(d_{1}, \ldots, d_{n}\) with \(d_{i} \leq e_{i}\) for all \(i\), we define \(P(d_{1}, \ldots, d_{n})\) to be the subgroup
	\[
		\bigoplus_{i = 1}^{n} p^{d_{i}} \Z / p^{e_{i}} \Z
	\]
	of \(P\).
	
	Equivalently, if we let \(\pi: \Z^{n} \to P\) be the natural projection, then \(P(d_{1}, \ldots, d_{n}) = \pi(p^{d_{1}} \Z \oplus \ldots \oplus p^{d_{n}} \Z)\).
\end{defin}
\begin{theorem}	\label{theo:characteristicSubgroupsFiniteAbelianpGroup}
	Let \(d_{1}, \ldots, d_{n}\) be non-negative integers with \(d_{i} \leq e_{i}\) for all \(i\). Then \(Q := P(d_{1}, \ldots, d_{n})\) is characteristic in \(P\) if and only if the following two conditions hold:
	\begin{enumerate}[(i)]
		\item for all \(i \in \{1, \ldots, n - 1\}\) we have \(d_{i} \leq d_{i + 1}\).
		\item for all \(i \in \{1, \ldots, n - 1\}\) we have \(e_{i} - d_{i} \leq e_{i + 1} - d_{i + 1}\).
	\end{enumerate}
	Moreover, if \(Q\) is characteristic, \(d_{i} < e_{i}\) for all \(i \in \{1, \ldots, n\}\) and \(\phi \in \Aut(P)\) is represented by the matrix \(M\) as in \cref{theo:AutomorphismGroupAbelianPGroup}, then the induced automorphism on \(Q\) is represented by the matrix \(\inv{D} M D\), where \(D = Diag(p^{d_{1}}, \ldots, p^{d_{n}})\).
\end{theorem}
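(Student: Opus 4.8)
The plan is to prove the two parts of \cref{theo:characteristicSubgroupsFiniteAbelianpGroup} separately, using the explicit description of $\Aut(P)$ from \cref{theo:AutomorphismGroupAbelianPGroup}.

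\emph{Characteristic criterion.} For the forward direction I would assume $Q = P(d_1,\ldots,d_n)$ is characteristic and test it against two families of automorphisms coming from $A(P)$. To force condition (i), consider, for a fixed $i$, the automorphism represented by a matrix equal to the identity except that it also sends the $i$th generator to (a suitable multiple of) itself plus the $(i+1)$st generator; this is admissible in $A(P)$ precisely when $e_i \leq e_{i+1}$, which holds since $e \in E(n)$. Applying it to the element $\pi(p^{d_{i+1}} \transpose{z}_{i+1})$... wait — rather, applying a transvection that maps generator $i+1$ to generator $i+1$ plus generator $i$ (legal since $p^{e_{i+1}-e_i}$-divisibility is vacuous for $j=i \le i+1=i$, as $e_{i+1}-e_i \ge 0$; one checks the $A(P)$-condition $p^{e_{i+1}-e_{i}} \mid M_{i+1,i}$ with $M_{i+1,i}=1$ forces $e_{i+1}=e_i$, so instead I use the transvection in the other direction, generator $i \mapsto$ generator $i$ plus generator $i+1$, which requires $p^{e_i - e_{i+1}} \mid 1$, automatic since $e_i \le e_{i+1}$). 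Feeding $\pi(p^{d_i}\transpose{z}_i) \in Q$ through this map produces $\pi(p^{d_i}\transpose{z}_i + p^{d_i}\transpose{z}_{i+1})$, whose $(i+1)$st coordinate $p^{d_i}$ must lie in $p^{d_{i+1}}\Z/p^{e_{i+1}}\Z$, giving $d_i \ge d_{i+1}$ — so I must be careful about which inequality each transvection yields and pick the pair of transvections that together give (i). For condition (ii), the dual trick uses the transvection in the opposite direction (legal when $e_i \le e_{i+1}$ fails to block it, i.e. always, after adjusting the power of $p$ in the entry), applied to a generator of order $p^{e_{i+1}}$ to see that the "co-depth" $e_i - d_i$ cannot exceed $e_{i+1}-d_{i+1}$; equivalently one can invoke that $Q$ characteristic implies the "Frattini-dual" subgroup (the set of $x$ with $p^{e_i - d_i - 1}$-type annihilator conditions) is also characteristic, which swaps the roles of the two conditions. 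For the converse, I would show conditions (i) and (ii) together guarantee that for every $M \in A(P)$ with $M \bmod p \in \GL(n,\ZpZ)$ one has $M(p^{d_1}\Z \oplus \cdots \oplus p^{d_n}\Z) \subseteq p^{d_1}\Z \oplus \cdots \oplus p^{d_n}\Z$ modulo $(p^{e_1},\ldots,p^{e_n})$: the $(i,j)$ entry of $M$ applied to $p^{d_j}$ in slot $j$ lands in slot $i$ as $M_{ij} p^{d_j}$, and I need $p^{d_i} \mid M_{ij} p^{d_j}$ in $\Z/p^{e_i}\Z$. When $j \le i$ we know $p^{e_i - e_j} \mid M_{ij}$, so $M_{ij}p^{d_j}$ has $p$-adic valuation $\ge (e_i - e_j) + d_j \ge d_i$ by condition (ii); when $j > i$ we know $p^{\max(0,\,e_i - e_j)}=1$ divides $M_{ij}$ but $d_j \ge d_i$ by condition (i) already suffices once one checks the valuation bookkeeping, except one also needs the analogous lower-triangular-type estimate — here condition (i) $d_i \le d_j$ handles $j>i$ directly when $M_{ij}$ is a unit and the order constraint $e_i \le e_j$ from $e \in E(n)$ keeps everything consistent. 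Then $\Psi(M)$ preserves $Q$, and since this holds for all such $M$, $Q$ is characteristic.

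\emph{Induced matrix.} Assuming $Q$ characteristic with $d_i < e_i$ for all $i$, the group $Q = \bigoplus_i p^{d_i}\Z/p^{e_i}\Z$ is itself a finite abelian $p$-group of type $e' := (e_1 - d_1, \ldots, e_n - d_n)$, which lies in $E(n)$ precisely by condition (ii). The natural projection $\pi_Q : \Z^n \to Q$ is $\transpose{x} \mapsto \pi(D\transpose{x})$ with $D = \Diag(p^{d_1},\ldots,p^{d_n})$, so that $\pi_Q(\transpose{z}_i)$ is the standard $i$th generator of $Q$. For $\phi = \Psi(M)$ the restriction $\phi|_Q$ satisfies $\phi|_Q(\pi_Q(\transpose{x})) = \phi(\pi(D\transpose{x})) = \pi(MD\transpose{x}) = \pi(D \cdot \inv{D}MD\transpose{x}) = \pi_Q(\inv{D}MD\transpose{x})$, and $\inv{D}MD$ has integer entries exactly because $M \in A(P)$ satisfies the divisibility conditions (the off-diagonal entry $M_{ij}$ with $j\le i$ is divisible by $p^{e_i - e_j} = p^{(e_i - d_i)-(e_j-d_j)}\cdot p^{d_i - d_j}$, and multiplying by $p^{d_j - d_i}$ from the conjugation leaves at least $p^{(e_i-d_i)-(e_j-d_j)}$, i.e. $\inv{D}MD \in A(Q)$). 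Hence by \cref{theo:AutomorphismGroupAbelianPGroup} applied to $Q$, the induced automorphism on $Q$ is represented by $\inv{D}MD$, which is what we wanted.

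\emph{Main obstacle.} The delicate point is the bookkeeping of $p$-adic valuations in the converse direction of the characteristic criterion: one must carefully separate the cases $j \le i$ and $j > i$, combine the $A(P)$-divisibility $p^{e_i - e_j} \mid M_{ij}$ (for $j \le i$) with conditions (i) and (ii), and verify that in every case $v_p(M_{ij} p^{d_j}) \ge d_i$ modulo the relation of order $p^{e_i}$, paying attention to the boundary cases where $M_{ij}$ is a unit. The forward direction's subtlety is dual: choosing the correct transvections so that conditions (i) and (ii) each drop out cleanly, and for (ii) this is where one genuinely needs $d_i < e_i$ (or at least an argument that still works when some $d_i = e_i$, in which case the $i$th summand of $Q$ is trivial and the condition degenerates).
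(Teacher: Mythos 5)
Your handling of the second statement (the induced matrix) is essentially the paper's own argument: the paper first makes the identification \(Q \cong \bigoplus_{i} \ZmodZ{p^{e_{i}-d_{i}}}\) explicit via an isomorphism \(\Phi\) and then computes exactly your chain \(\phi_{Q}(\pi(D\transpose{x})) = \pi(MD\transpose{x}) = \pi(D\inv{D}MD\transpose{x})\); your check that \(\inv{D}MD\) is integral and satisfies the \(A(Q)\)-divisibilities is correct. For the characteristic criterion you take a genuinely different route: the paper does not prove the equivalence at all, but cites Kerby--Rode (who establish it for the subgroup \(P(e_{1}-d_{1},\ldots,e_{n}-d_{n})\)) and then observes that the pair of conditions (i)--(ii) is invariant under \(d \mapsto (e_{1}-d_{1},\ldots,e_{n}-d_{n})\). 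Your direct argument buys self-containedness, and your converse direction is correct and clean: for \(j \le i\) one has \(\nu_{p}(M_{ij}p^{d_{j}}) \ge e_{i}-e_{j}+d_{j} \ge d_{i}\) by (ii), and for \(j > i\) one has \(\nu_{p}(M_{ij}p^{d_{j}}) \ge d_{j} \ge d_{i}\) by (i), both conditions being extended to non-consecutive indices by transitivity.

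The one place you must still commit is the forward direction, which you leave with ``pick the pair of transvections that together give (i)''. The correct bookkeeping is: the matrix equal to the identity except for an extra entry \(1\) in position \((i, i+1)\) is unconditionally in \(A(P)\) (the divisibility constraint only concerns positions with column index \(\le\) row index) and is invertible mod \(p\); applying it to \(\pi(p^{d_{i+1}}\transpose{z}_{i+1}) \in Q\) puts \(p^{d_{i+1}}\) in the \(i\)th coordinate and forces \(d_{i+1} \ge d_{i}\), i.e.\ (i). Dually, the identity with an extra entry \(p^{e_{i+1}-e_{i}}\) in position \((i+1, i)\) is always in \(A(P)\); applying it to \(\pi(p^{d_{i}}\transpose{z}_{i}) \in Q\) --- not to a generator of order \(p^{e_{i+1}}\) as you wrote --- puts \(p^{d_{i}+e_{i+1}-e_{i}}\) in the \((i+1)\)st coordinate and forces \(d_{i}+e_{i+1}-e_{i} \ge d_{i+1}\), i.e.\ (ii), except in the degenerate case \(d_{i}=e_{i}\), where that coordinate vanishes but (ii) holds trivially. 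With those two test automorphisms pinned down, your proof is complete and arguably preferable to the paper's, since it replaces the external reference by a short computation.
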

\begin{proof}
	For the first part, we use \cite[Theorem~2.2]{KerbyRode11}. There it is proven that the conditions on \(d_{1}, \ldots, d_{n}\) are equivalent with the subgroup \(P(e_{1} - d_{1}, \ldots, e_{n} - d_{n})\) being characteristic. However, if the \(n\)-tuple \(d := (d_{1}, \ldots, d_{n})\) satisfies the two conditions, then so does the \(n\)-tuple \(d' := (e_{1} - d_{1}, \ldots, e_{n} - d_{n})\), and vice versa. Indeed, the second condition for \(d\) implies the first one for \(d'\), and by symmetry, the first for \(d\) implies the second for \(d'\). Moreover, since \(0 \leq d_{i} \leq e_{i}\) for all \(i\), also \(0 \leq e_{i} - d_{i} \leq e_{i}\) for all \(i\). This proves the first part.
	
	Suppose now that \(Q\) is characteristic in \(P\) and that \(d_{i} < e_{i}\) for all \(i \in \{1, \ldots, n\}\). Fix \(\phi \in \Aut(P)\) and suppose that it is represented by \(M\). In order to use \cref{theo:AutomorphismGroupAbelianPGroup} to talk about the matrix representation of automorphisms of \(Q\), we have to write \(Q\) as a direct sum of cyclic groups of prime-power order. It is readily verified that
	\[
		\Phi : \bigoplus_{i = 1}^{n} \ZmodZ{p^{e_{i} - d_{i}}} \to \bigoplus_{i = 1}^{n} p^{d_{i}} \Z / p^{e_{i}} \Z: (x_{1}, \ldots, x_{n}) \mapsto (p^{d_{1}} x_{1}, \ldots, p^{d_{n}} x_{n})
	\]
	is an isomorphism, which implies that \(Q\) is an abelian \(p\)-group of type \((e_{1} - d_{1}, \ldots, e_{n} - d_{n})\). Let \(\tilde{Q}\) denote the group on the left-hand side.
	Write \(\pi_{P}: \Z^{n} \to P\) and \(\pi_{\tilde{Q}}: \Z^{n} \to \tilde{Q}\) for the natural projections onto \(P\) and \(\tilde{Q}\). Then \(\phi(\pi_{P}(\transpose{x})) = \pi_{P}(M \transpose{x})\) for all \(\transpose{x} \in \Z^{n}\). Let \(\phi_{Q}\) denote the induced automorphism on \(Q\) and put \(\psi := \inv{\Phi} \circ \phi_{Q} \circ \Phi\). Now, suppose that \(\transpose{x} \in \Z^{n}\) is such that \(\pi_{P}(\transpose{x}) \in Q\). Then we have \(x = (p^{d_{1}}y_{1}, \ldots, p^{d_{n}} y_{n})\) for some \(y_{1}, \ldots, y_{n} \in \Z\). Put \(y = (y_{1}, \ldots, y_{n})\). Then \(\transpose{x} = D\transpose{y}\) and therefore, \(\pi_{\tilde{Q}}(\transpose{y}) = \inv{\Phi}(\pi_{P}(\transpose{x}))\). Thus,
	\begin{align*}
		\psi(\pi_{\tilde{Q}}(\transpose{y}))	&= (\inv{\Phi} \circ \phi_{Q} \circ \Phi \circ \inv{\Phi})(\pi_{P}(\transpose{x}))	\\
									&= \inv{\Phi}(\phi_{Q}(\pi_{P}(\transpose{x})))	\\
									&= \inv{\Phi}(\phi(\pi_{P}(\transpose{x})))	\\
									&= \inv{\Phi}(\pi_{P}(M\transpose{x}))	\\
									&= \inv{\Phi}(\pi_{P}(MD \transpose{y})).
	\end{align*}
	Note that we can rewrite the equality \(\pi_{\tilde{Q}}(\transpose{y}) = \inv{\Phi}(\pi_{P}(\transpose{x}))\) as
	\[
		\pi_{\tilde{Q}}(\transpose{y}) = \inv{\Phi}(\pi_{P}(D\transpose{y})),
	\]
	which holds for arbitrary \(\transpose{y} \in \Z^{n}\). Since we know that \(\pi_{P}(MD \transpose{y}) \in Q\), we know that \(\inv{D}MD \transpose{y}\) is a well-defined element of \(\Z^{n}\). Thus, using the equalities above, we get
	\begin{align*}
		\psi(\pi_{\tilde{Q}}(\transpose{y}))	&= \inv{\Phi}(\pi_{P}(MD \transpose{y}))	\\
									&= \inv{\Phi}(\pi_{P}(D \inv{D}MD \transpose{y}))	\\
									&= \pi_{\tilde{Q}}(\inv{D}MD \transpose{y}).
	\end{align*}
	We conclude that the matrix representation of \(\psi\) is given by \(\inv{D} MD\), which finishes the proof.
\end{proof}
We now construct the aforementioned suitable characteristic subgroup by specifying the non-negative integers \(d_{i}\).

\begin{defin}
	Given \(e \in E(n)\) and its \(abc\)-decomposition as in \cref{def:abcDecomposition}, we define a new \(n\)-tuple \(d = (d_{1}, \ldots, d_{n})\) recursively. Put \(d_{1} = 0\). Given \(d_{i}\), we define
	\[
		d_{i + 1} = \begin{cases}
			d_{i}	&	\mbox{if \(e_{i}\) and \(e_{i + 1}\) lie in the same block or \(e_{i + 1}\) lies in an \(a\)-block}	\\
			d_{i} + 1	&	\mbox{if \(e_{i}\) and \(e_{i + 1}\) do not lie in the same block and \(e_{i + 1}\) lies in a \(b\)- or \(c\)-block.}
		\end{cases}
	\]
	We let \(d(e)\) denote this sequence.
\end{defin}
For example, given \(e = ((1, 1), (2, 3), (4, 4), (6, 7), (8), (10), (12, 13))\) as before with its \(abc\)-decomposition marked, we find that
\[
	d(e) = (0, 0, 1, 1, 1, 1, 2, 2, 3, 4, 5, 5).
\]
\begin{lemma}	\label{lem:Propertiesd(e)}
	Given \(e \in E(n)\), its associated \(n\)-tuple \(d(e)\) has the following properties:
	\begin{enumerate}[(i)]
		\item for all \(i, j \in \{1, \ldots, n\}\) with \(i < j\) we have \(d_{i} \leq d_{j}\) with strict inequality if \(e_{j}\) is the first element of a \(b\)- or \(c\)-block.
		\item for all \(i, j \in \{1, \ldots, n\}\) with \(i < j\) we have \(d_{j} - d_{i} \leq e_{j} - e_{i}\), with strict inequality if \(e_{i}\) is the first element of a \(b\)- or \(c\)-block.
		\item for all \(i \in \{1, \ldots, n\}\) we have \(d_{i} < e_{i}\).
	\end{enumerate}
\end{lemma}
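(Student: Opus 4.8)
The plan is to first isolate a single structural fact about the sequence $d = d(e)$ and then read off all three properties from it. \emph{Key observation:} directly from the recursive definition, $d_{i+1} - d_i \in \{0,1\}$ for every $i$, and $d_{i+1} - d_i = 1$ holds precisely when $e_{i+1}$ is the first element of a $b$- or a $c$-block (in all other cases — $e_i, e_{i+1}$ in the same block, or $e_{i+1}$ starting an $a$-block — the value of $d$ is simply repeated). This already gives the non-strict part of (i), since $d$ is non-decreasing. For the strict part: if $i < j$ and $e_j$ is the first element of a $b$- or $c$-block, then $j \ge 2$ (if $j=1$ there is no $i < j$), so $d_j = d_{j-1} + 1$, and as $i \le j-1$ and $d$ is non-decreasing, $d_j \ge d_i + 1 > d_i$.

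For (ii) I would telescope both sides, writing $d_j - d_i = \sum_{k=i+1}^{j}(d_k - d_{k-1})$ and $e_j - e_i = \sum_{k=i+1}^{j}(e_k - e_{k-1})$, and then prove the termwise bound $d_k - d_{k-1} \le e_k - e_{k-1}$ for each $k$. If $d_k - d_{k-1} = 0$ this is just monotonicity of $e$; if $d_k - d_{k-1} = 1$, then by the key observation $e_k$ starts a $b$- or $c$-block, so $e_{k-1}$ lies in a strictly earlier block, and $e_{k-1} = e_k$ is impossible (that would place both in a common maximal constant run of length $\ge 2$, i.e.\ the same $a$-block), hence $e_k - e_{k-1} \ge 1 = d_k - d_{k-1}$. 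Summing yields (ii) without strictness. For the strict version I would assume in addition that $e_i$ starts a $b$- or $c$-block; then $k = i+1$ occurs in the above sum (as $i < j$), and I claim that term is already strict. If $e_i$ starts a $b$-block $(e_i, e_i+1)$, then $e_{i+1} = e_i+1$ lies in the same block, so $d_{i+1} - d_i = 0 < 1 = e_{i+1} - e_i$. If $e_i$ starts a $c$-block, then $e_{i+1}$ begins the next block with $e_{i+1} > e_i$: if that block is an $a$-block then $d_{i+1} - d_i = 0 < e_{i+1} - e_i$; if it is a $b$-block then the Remark following \cref{def:abcDecomposition} gives $e_{i+1} \ge e_i + 2$, so $1 = d_{i+1} - d_i < 2 \le e_{i+1} - e_i$; and if it is a $c$-block then Step~3 of the $abc$-decomposition forces $e_{i+1} \ge e_i + 2$ as well. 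Adding this single strict term to the non-strict termwise bounds for the remaining indices gives $d_j - d_i < e_j - e_i$.

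Property (iii) should then fall out of (ii): for $i = 1$ it reads $0 = d_1 < 1 \le e_1$, and for $i \ge 2$, applying (ii) to the pair $1 < i$ together with $d_1 = 0$ and $e_1 \ge 1$ gives $d_i = d_i - d_1 \le e_i - e_1 < e_i$. The routine parts are the telescoping and the dichotomy for $d_{i+1} - d_i$; I expect the only delicate point to be the strict part of (ii) in the sub-case where $e_i$ heads a $c$-block, where one must rule out $e_{i+1} = e_i + 1$ (which would let $d$ and $e$ grow in lockstep), and this is precisely what the Remark (for a succeeding $b$-block) and Step~3 (for a succeeding $c$-block) of the $abc$-decomposition are there to supply.
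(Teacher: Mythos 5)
Your proof is correct and follows essentially the same route as the paper's: both reduce (ii) to the adjacent case \(j = i+1\) via a case analysis on the block types of \(e_i\) and \(e_{i+1}\) and then combine (you by telescoping, the paper by induction on \(j-i\)), with (i) and (iii) handled the same way. Your up-front dichotomy \(d_k - d_{k-1} \in \{0,1\}\), with the increment occurring exactly when \(e_k\) is the first element of a \(b\)- or \(c\)-block, is just a slightly tidier packaging of the paper's six-case analysis.
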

\begin{proof}
	The sequence \(d(e)\) is non-decreasing by construction, proving the inequality in the first item. For the strictness part, note that it follows by construction if \(i = j - 1\), and the general case follows from the chain \(d_{i} \leq d_{j - 1} < d_{j}\).
	
	For the second item, we first prove it for \(j = i + 1\). By definition, we have
	\[
		d_{i + 1} - d_{i} = \begin{cases}
			0	&	\mbox{if \(e_{i}\) and \(e_{i + 1}\) lie in the same block or \(e_{i + 1}\) lies in an \(a\)-block}	\\
			1	&	\mbox{if \(e_{i}\) and \(e_{i + 1}\) do not lie in the same block and \(e_{i + 1}\) lies in a \(b\)- or \(c\)-block.}
		\end{cases}
	\]
	We now consider \(e_{i + 1} - e_{i}\). We distinguish several cases, based on the type of blocks in which \(e_{i + 1}\) and \(e_{i}\) lie.
	\begin{itemize}
		\item \(e_{i}\) and \(e_{i + 1}\) lie in the same \(a\)-block: then \(e_{i + 1} - e_{i} = 0\), by definition of an \(a\)-block.
		
		Since \(d_{i + 1} - d_{i} = 0\), we have \(d_{i + 1} - d_{i} \leq e_{i + 1} - e_{i}\). 
		\item \(e_{i}\) and \(e_{i + 1}\) lie in the same \(b\)-block: then \(e_{i + 1} - e_{i} = 1\), by definition of a \(b\)-block.
		
		Since \(d_{i + 1} - d_{i} = 0\), we have \(d_{i + 1} - d_{i} < e_{i + 1} - e_{i}\).
		\item \(e_{i}\) lies in an \(a\)- or \(b\)-block, \(e_{i + 1}\) does not lie in the same block: then \(e_{i + 1} - e_{i} \geq 1\), for otherwise \(e_{i + 1}\) and \(e_{i}\) would be part of an \(a\)-block. 
		
		Since \(d_{i + 1} - d_{i} \leq 1\), we have \(d_{i + 1} - d_{i} \leq e_{i + 1} - e_{i}\).
		\item \(e_{i}\) lies in a \(c\)-block, \(e_{i + 1}\) lies in an \(a\)-block: then \(e_{i + 1} - e_{i} \geq 1\) for the same reason as in the previous case.
		
		Since \(d_{i + 1} - d_{i} = 0\), we have \(d_{i + 1} - d_{i} < e_{i + 1} - e_{i}\).
		\item \(e_{i}\) lies in a \(c\)-block, \(e_{i + 1}\) lies in a \(b\)-block: then \(e_{i + 1} - e_{i} \geq 2\) by the remark following \cref{def:abcDecomposition}.
		
		Since \(d_{i + 1} - d_{i} = 1\), we have \(d_{i + 1} - d_{i} < e_{i + 1} - e_{i}\).
		\item \(e_{i}\) lies in a \(c\)-block, \(e_{i + 1}\) lies in a \(c\)-block: then \(e_{i + 1} - e_{i} \geq 2\), for otherwise \(e_{i + 1}\) and \(e_{i}\) would be part of an \(a\)-block or one or more \(b\)-blocks.
		
		Since \(d_{i + 1} - d_{i} = 1\), we have \(d_{i + 1} - d_{i} < e_{i + 1} - e_{i}\).
	\end{itemize}
	We see that in all cases the inequality \(d_{i + 1} - d_{i} \leq e_{i + 1} - e_{i}\) holds. Moreover, in the cases where \(e_{i}\) is the first element of a \(b\)- or \(c\)-block, we have proven that in fact the strict inequality holds. This finishes the proof for \(j = i + 1\).	
	
	We prove the general case by induction on \(j - i\), with base case \(j - i = 1\). Suppose it holds for all \(i < j\) with \(j - i < k\). Suppose that \(j - i = k\). Note that
	\[
		e_{j} - e_{i} - d_{j} + d_{i} = (e_{j} - e_{j - 1} - d_{j} + d_{j - 1}) + (e_{j - 1} - e_{i} + d_{i} - d_{j - 1}).
	\]
	Both terms on the right-hand side are non-negative by the induction hypothesis, hence the left-hand side is non-negative as well. Moreover, if \(e_{i}\) is the first element of a \(b\)- or \(c\)-block, then \(e_{j - 1} - e_{i} + d_{i} - d_{j - 1} > 0\), which implies that also \(e_{j} - e_{i} - d_{j} + d_{i} > 0\).
	
	Finally, for the third item, we again proceed by induction. For \(i = 1\), we have \(d_{1} = 0 < 1 \leq e_{1}\). So, suppose \(d_{i} < e_{i}\). Then by the second item, we know that \(d_{i + 1} - d_{i} \leq e_{i + 1} - e_{i}\). Adding the inequality \(d_{i} < e_{i}\) side by side yields \(d_{i + 1} < e_{i + 1}\).
\end{proof}

\begin{cor}
	The subgroup \(P(d_{1}, \ldots, d_{n})\) is a characteristic subgroup of \(P\).
\end{cor}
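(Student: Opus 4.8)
The plan is simply to invoke \cref{theo:characteristicSubgroupsFiniteAbelianpGroup}, whose criterion for \(P(d_{1}, \ldots, d_{n})\) to be characteristic is the conjunction of the two conditions (i) \(d_{i} \leq d_{i+1}\) for all \(i \in \{1, \ldots, n-1\}\) and (ii) \(e_{i} - d_{i} \leq e_{i+1} - d_{i+1}\) for all \(i \in \{1, \ldots, n-1\}\). Both of these are immediate consequences of \cref{lem:Propertiesd(e)} applied with \(j = i+1\): item (i) of that lemma gives \(d_{i} \leq d_{i+1}\), which is exactly condition (i), and item (ii) gives \(d_{i+1} - d_{i} \leq e_{i+1} - e_{i}\), which rearranges to \(e_{i} - d_{i} \leq e_{i+1} - d_{i+1}\), i.e.\ condition (ii).

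So the proof is a one-line deduction: by \cref{lem:Propertiesd(e)}(i) and (ii) the tuple \(d = d(e)\) satisfies the two numerical conditions of \cref{theo:characteristicSubgroupsFiniteAbelianpGroup}, hence \(P(d_{1}, \ldots, d_{n})\) is characteristic in \(P\). (One also notes in passing that \cref{lem:Propertiesd(e)}(iii), \(d_{i} < e_{i}\) for all \(i\), guarantees we are in the regime where the second half of \cref{theo:characteristicSubgroupsFiniteAbelianpGroup} about the induced automorphism \(\inv{D} M D\) applies, which is what will be needed in the sequel.)

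There is essentially no obstacle here; the corollary is a bookkeeping consequence of the preceding lemma and theorem. The only thing to be careful about is matching the indexing conventions: \cref{lem:Propertiesd(e)} is phrased for arbitrary \(i < j\), so one must explicitly specialise to consecutive indices to land precisely on the hypotheses of \cref{theo:characteristicSubgroupsFiniteAbelianpGroup}.
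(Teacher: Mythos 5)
Your proposal is correct and matches the paper's own proof, which likewise just observes that \cref{lem:Propertiesd(e)} gives the two conditions of \cref{theo:characteristicSubgroupsFiniteAbelianpGroup}; you merely spell out the specialisation to consecutive indices, which the paper leaves implicit.
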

\begin{proof}
	By the previous lemma, \(d(e)\) satisfies all the conditions from \cref{theo:characteristicSubgroupsFiniteAbelianpGroup}.
\end{proof}
We will use the subgroup \(Q := P(d_{1}, \ldots, d_{n})\) to prove the lower bound on the number of fixed points.

\begin{lemma}	\label{lem:MatrixRepresentationModpOnQ}
	Let \(\phi \in \Aut(P)\) be represented by a matrix \(M \in \Z^{n \times n}\). Put \(D = Diag(p^{d_{1}}, \ldots, p^{d_{n}})\) and let \(i \ne j \in \{1, \ldots, n\}\). Then the following hold:
	\begin{enumerate}[(i)]
		\item \((\inv{D}MD)_{ij} \equiv 0 \bmod p\) if \(e_{j}\) is the first element of a \(b\)- or \(c\)-block.
		\item \((\inv{D}MD)_{jj} \not \equiv 0 \bmod p\) if \(e_{j}\) is the first element of a \(b\)- or \(c\)-block.
	\end{enumerate}
\end{lemma}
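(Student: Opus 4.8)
The plan is to compute the entries of \(\inv{D}MD\) explicitly and check divisibility by \(p\) case by case, using the inequalities for \(d(e)\) from \cref{lem:Propertiesd(e)} together with the defining divisibility conditions of \(A(P)\). Since \(D = \Diag(p^{d_{1}}, \ldots, p^{d_{n}})\) is diagonal, we have \((\inv{D}MD)_{ij} = p^{d_{j} - d_{i}} M_{ij}\); by \cref{theo:characteristicSubgroupsFiniteAbelianpGroup} this is an integer matrix (it represents the induced automorphism on \(Q\), which is characteristic by the preceding corollary, and \(d_{i} < e_{i}\) for all \(i\) by \cref{lem:Propertiesd(e)}(iii)), so reduction modulo \(p\) is meaningful.

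For item (i) I would fix \(i \ne j\) and assume \(e_{j}\) is the first element of a \(b\)- or \(c\)-block. If \(i < j\), then \cref{lem:Propertiesd(e)}(i) gives the \emph{strict} inequality \(d_{i} < d_{j}\), so \(p \mid p^{d_{j} - d_{i}}\) and hence \(p \mid (\inv{D}MD)_{ij}\). If \(i > j\), then \(j \leq i\), so the definition of \(A(P)\) yields \(p^{e_{i} - e_{j}} \mid M_{ij}\); moreover \cref{lem:Propertiesd(e)}(ii), applied to the pair \(j < i\) with \(e_{j}\) the first element of its block, gives the strict inequality \(d_{i} - d_{j} < e_{i} - e_{j}\), that is, \((e_{i} - e_{j}) + (d_{j} - d_{i}) \geq 1\). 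Writing \(M_{ij} = p^{e_{i} - e_{j}} m\) for some \(m \in \Z\), we obtain \((\inv{D}MD)_{ij} = p^{(e_{i} - e_{j}) + (d_{j} - d_{i})} m\), which is divisible by \(p\).

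For item (ii) the computation is immediate, since \((\inv{D}MD)_{jj} = M_{jj}\), so the real content is to show \(M_{jj} \not\equiv 0 \bmod p\). The key point is that when \(e_{j}\) is the first element of a \(b\)- or \(c\)-block, the value \(e_{j}\) is attained by \(e\) only at the index \(j\): since \(e_{j}\) lies in no \(a\)-block it cannot belong to a constant subsequence of length \(\geq 2\), which forces \(e_{j - 1} < e_{j}\) when \(j > 1\); and, according to whether \(e_{j}\) opens a \(b\)-block (so that \(e_{j + 1} = e_{j} + 1\)) or forms a \(c\)-block, one checks \(e_{j + 1} > e_{j}\) when \(j < n\). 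Now partition \(\{1, \ldots, n\}\) into the maximal intervals on which \(e\) is constant; whenever an index \(i\) lies in a later interval than an index \(k\) we have \(i > k\) and \(e_{i} > e_{k}\), hence \(p \mid M_{ik}\) by the definition of \(A(P)\), so \(M \bmod p\) is block upper triangular with respect to this partition. As \(M \bmod p \in \GL(n, \ZpZ)\), each diagonal block is invertible; since the interval containing \(j\) is the singleton \(\{j\}\), the corresponding diagonal block is the \(1 \times 1\) matrix \((M_{jj} \bmod p)\), whence \(M_{jj} \not\equiv 0 \bmod p\).

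The only genuinely non-routine step is this last one: recognising that ``first element of a \(b\)- or \(c\)-block'' forces the value \(e_{j}\) to occur exactly once in \(e\), and then extracting the non-vanishing of the diagonal entry \(M_{jj}\) from the block-triangular shape that membership in \(A(P)\) imposes on \(M \bmod p\). Everything else is bookkeeping with the inequalities of \cref{lem:Propertiesd(e)} and the divisibility conditions defining \(A(P)\).
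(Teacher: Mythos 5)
Your proof of item (i) is correct and is essentially the paper's argument: compute \((\inv{D}MD)_{ij} = p^{d_{j} - d_{i}}M_{ij}\), and combine the strict inequalities of \cref{lem:Propertiesd(e)} (\(d_{i} < d_{j}\) for \(i < j\); \(d_{i} - d_{j} < e_{i} - e_{j}\) for \(i > j\)) with the divisibility condition \(p^{e_{i} - e_{j}} \mid M_{ij}\) defining \(A(P)\). For item (ii), however, you take a genuinely different route. The paper deduces it from item (i) almost for free: \(\inv{D}MD\) represents the induced automorphism on the characteristic subgroup \(Q\), so by \cref{theo:AutomorphismGroupAbelianPGroup} it must be invertible modulo \(p\); since item (i) shows the \(j\)th column is zero modulo \(p\) off the diagonal, the diagonal entry cannot also vanish. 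You instead work directly with \(M\): you observe that \((\inv{D}MD)_{jj} = M_{jj}\), that a first element of a \(b\)- or \(c\)-block is a singleton among the maximal constant runs of \(e\) (its neighbours are strictly smaller and larger, else it would sit in an \(a\)-block), and that the \(A(P)\) conditions force \(M \bmod p\) to be block upper triangular with respect to these runs, so the \(1 \times 1\) diagonal block \((M_{jj})\) must be invertible in \(\ZpZ\). Both arguments are valid. The paper's is shorter because it recycles item (i) and the already-established fact that \(\inv{D}MD\) represents an automorphism of \(Q\); yours is independent of \(D\) and of the theory of the induced automorphism on \(Q\), needing only \(M \bmod p \in \GL(n, \ZpZ)\), and it isolates the slightly stronger structural fact that \(M_{jj}\) is a unit modulo \(p\) whenever the value \(e_{j}\) occurs exactly once in \(e\).
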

\begin{proof}
	For \(a \in \Z\), denote by \(\nu_{p}(a)\) the \(p\)-adic valuation of \(a\). First, remark that \((\inv{D}MD)_{ij} = \inv{D}_{ii} M_{ij}D_{jj}\), as \(D\) is diagonal. Next, by the properties of \(M\) and the definition of \(D\), we have that
	\[
		\nu_{p}((\inv{D} M D)_{ij}) = \nu_{p}(\inv{D}_{ii}M_{ij}D_{jj}) \geq \begin{cases}
			e_{i} - e_{j} + d_{j} - d_{i}	&	\mbox{if } i > j	\\
			 d_{j} - d_{i}				&	\mbox{if } i < j.
		\end{cases}
	\]
	Suppose that \(e_{j}\) is the first element of a \(b\)- or \(c\)-block. Then by \cref{lem:Propertiesd(e)}, each of the expressions above is at least \(1\). Therefore, \((\inv{D}MD)_{ij} \equiv 0 \bmod p\).
	
	For \((\inv{D} MD)_{jj}\), note that \(\inv{D}MD\) is the matrix representation of \(\phi_{Q}\), by \cref{theo:characteristicSubgroupsFiniteAbelianpGroup}. Moreover, it has to be invertible modulo \(p\) in order to define an automorphism on \(Q / pQ\). Since the \(j\)th column of \(\inv{D}MD\) is zero modulo \(p\) everywhere above and below the diagonal entry, the entry on the diagonal must be non-zero modulo \(p\).
\end{proof}
Finally, we prove \cref{theo:LowerBoundNumberOfFixedPointsAbelianPGroup}. For a matrix \(A \in \Z^{n \times n}\), we write \(\bar{A}\) for the matrix \(A \bmod p \in (\ZpZ)^{n \times n}\).
\begin{proof}[Proof of \cref{theo:LowerBoundNumberOfFixedPointsAbelianPGroup}]
	Let \(\phi \in \Aut(P)\) be represented by \(M \in \Z^{n \times n}\) and let \(d_{1}, \ldots, d_{n}, Q\) and \(D\) be as before. Since \(d_{i} < e_{i}\), the group \(Q\) has type \(\tilde{e} \in E(n)\). The matrix representation of \(\phi_{Q}\) is given by \(N := \inv{D} M D\), by \cref{theo:characteristicSubgroupsFiniteAbelianpGroup}. Let \(\bar{\phi}\) denote the induced automorphism on the exponent-\(p\) factor group \(Q / pQ\). 
	By \cref{lem:InducedMatrixOnExponentpQuotient}, the matrix representation of \(\bar{\phi}\) with respect to the basis \((1, 0, \ldots, 0), \ldots, (0, \ldots, 0, 1)\) is given by  \(\bar{N}\). 
By \cref{lem:MatrixRepresentationModpOnQ}, each column corresponding to a \(c\)-block and to a first element of a \(b\)-block in \(N\) is zero modulo \(p\), except for the element on the diagonal.
	
	Next, remark that for \(i \in \{1, \ldots, p - 1\}\), the automorphism \(\mu_{i}\) is represented by the matrix \(X_{i} := \Diag(i, \ldots, i)\). The automorphism \(\mu_{i} \circ \phi\) is then represented by \(X_{i} M\), and the one of the induced automorphism \(\bar{\mu}_{i} \circ \bar{\phi}\) on \(Q / pQ\) by \(\bar{X}_{i}\bar{N}\). Fix \(j \in \{1, \ldots, n\}\) such that \(e_{j}\) is the first element of a \(b\)- or \(c\)-block. Then \(N_{jj} \not \equiv 0 \bmod p\) by \cref{lem:MatrixRepresentationModpOnQ}, hence there is a unique \(i \in \{1, \ldots, p - 1\}\) such that \(iN_{jj} \equiv 1 \bmod p\). For that \(i\), we have that the \(j\)th column of \(\bar{X}_{i} \bar{N} - \bar{I}_{n}\) is zero.
		
	Now, let \(\J\) be the set of indices \(j\) such that \(e_{j}\) is the first element of a \(b\)- or \(c\)-block. For \(i \in \{1, \ldots, p - 1\}\), let \(\J_{i} = \{j \in \J \mid i N_{jj} \equiv 1 \bmod p\}\). Note that \(\J\) is the disjoint union of \(\J_{1}\) up to \(\J_{p - 1}\), and that \(\size{\J} = b(e) + c(e)\). Then by the arguments above, \(\bar{\mu}_{i} \circ \bar{\phi}\) has at least \(p^{\size{J_{i}}}\) fixed points. Indeed, for each \(j \in \J_{i}\), the \(j\)th column of \(\bar{X}_{i} \bar{N} - \bar{I}_{n}\) is zero, hence \(\ker(\bar{X}_{i} \bar{N} - \bar{I}_{n})\) has at least dimension \(\size{\J_{i}}\). By \cref{prop:ReidemeisterNumberEqualsNumberOfFixedPoints}, we know that \(R(\bar{\mu}_{i} \circ \bar{\phi}) = \size{\Fix(\bar{\mu}_{i} \circ \bar{\phi})}\) and \(R(\mu_{i} \circ \phi) = \size{\Fix(\mu_{i} \circ \phi)}\). By \cref{lem:ReidemeisterNumbersExactSequence}, we know that
	\[
		R(\bar{\mu}_{i} \circ \bar{\phi}) \leq R\left(\restr{(\mu_{i} \circ \phi)}{Q}\right) \leq R(\mu_{i} \circ \phi).
	\]
	Combining these inequalities, we conclude that
	\[
		\prod_{i = 1}^{p - 1} \size{\Fix(\mu_{i} \circ \phi)} \geq \prod_{i = 1}^{p - 1} p^{\size{\J_{i}}} = p^{\sum\limits_{i = 1}^{p - 1} \size{\J_{i}}} = p^{\size{\J}} = p^{b(e) + c(e)}.	\qedhere
	\]
\end{proof}
\subsection{Upper bound}
The next result provides an upper bound for \(\SpecP(P)\).
\begin{prop}	\label{prop:UpperBoundNumberOfFixedPointsAbelianPGroup}
	Let \(P\) be a finite abelian \(p\)-group of type \(e\). Let \(\phi \in \Aut(P)\). Then
	\(
		\Pi(\phi) \leq p^{\Sigma(e)}.
	\)
\end{prop}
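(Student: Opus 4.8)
The goal is to bound $\Pi(\phi) = \prod_{i=1}^{p-1} \size{\Fix(\mu_i \circ \phi)}$, where each factor is a power of $p$ dividing $\size{P} = p^{\Sigma(e)}$. The most naive bound gives $\Pi(\phi) \leq p^{(p-1)\Sigma(e)}$, which is far too weak, so the crux is to show that the $p$-adic valuations $v_i := \nu_p(\size{\Fix(\mu_i \circ \phi)})$ satisfy $\sum_{i=1}^{p-1} v_i \leq \Sigma(e)$. The plan is to prove this by an averaging/orthogonality argument: the fixed-point sets $\Fix(\mu_i \circ \phi)$ for distinct values of $i \in \{1, \ldots, p-1\}$ must be "disjoint enough" because an element $x \in P$ can be fixed by $\mu_i \circ \phi$ for at most one value of $i$ modulo $p$ unless $x$ has extra divisibility.

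First I would make the key pointwise observation: if $x \in \Fix(\mu_i \circ \phi) \cap \Fix(\mu_j \circ \phi)$ with $i \not\equiv j \bmod p$, then $i\phi(x) = x = j\phi(x)$, so $(i-j)\phi(x) = 0$; since $i - j$ is a unit mod $p$ and $\phi$ is an automorphism, this forces $(i-j)x$ — hence $x$ — to lie in $pP$ after multiplying by the inverse of $(i-j)$... more precisely $\phi(x) \in P[p']$ where one must track that $i-j$ kills $\phi(x)$ only up to the $p$-part. The cleaner route: $(i-j)\phi(x)=0$ with $\gcd(i-j,p)=1$ means $\phi(x) \in pP$ is false in general (it means $\phi(x)=0$ only if $\size{P}$ were coprime to... no). Actually $(i-j)$ being a unit in $\Z$ modulo every $p^k$ forces $\phi(x) = 0$, hence $x = 0$. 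So the $\Fix(\mu_i\circ\phi)\setminus\{0\}$ are genuinely pairwise disjoint. That alone gives $\sum_i (\size{\Fix(\mu_i\circ\phi)} - 1) \leq \size{P} - 1$, which controls the sum of the sizes but not directly the sum of the valuations.

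To pass from sizes to exponents I would use convexity: among powers of $p$ with fixed sum, the sum of exponents $\sum \log_p(\text{size})$ is maximized when the mass is concentrated, so $\sum_i v_i$ is largest when one factor is as big as possible. Since each $\size{\Fix(\mu_i\circ\phi)} \leq p^{\Sigma(e)}$ and they cover disjoint nonzero sets inside $P$, the worst case is one factor equal to $p^{\Sigma(e)}$ (which happens iff $\phi = \mu_i^{-1}$ for some $i$, i.e. $\phi$ is multiplication by a scalar) and all others equal to $1$; this yields exactly $\Pi(\phi) \leq p^{\Sigma(e)}$. The main obstacle is making the last convexity step rigorous: I need that if $\size{\Fix(\mu_i\circ\phi)} = p^{a_i}$ with the nonzero parts pairwise disjoint in a group of order $p^{N}$, then $\sum a_i \leq N$. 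This is \emph{not} automatic from disjointness alone (e.g. three disjoint subgroups of order $p$ in $(\Z/p)^2$ give $\sum a_i = 3 > 2$), so I will instead need the sharper structural input: each $\Fix(\mu_i \circ \phi) = \ker(\mu_i\phi - \Id)$ is a subgroup, and I would argue via the module structure over $\Z/p^{e_n}\Z$ (or via Smith normal form of the $\Id - \mu_i\phi$ simultaneously) that $\bigoplus_i \Fix(\mu_i\circ\phi)$ embeds into $P$ — or more carefully, that $\sum_i v_i$ equals $\nu_p$ of the order of an internal direct sum sitting inside $P$, forcing $\sum_i v_i \leq \Sigma(e) = \nu_p(\size P)$. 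Establishing that the $\Fix(\mu_i\circ\phi)$ generate their internal direct sum inside $P$ — equivalently that a relation $\sum_i x_i = 0$ with $x_i \in \Fix(\mu_i\circ\phi)$ forces each $x_i = 0$ — is the heart of the matter, and I expect to prove it by applying the operator $\prod_{j \neq i}(\mu_j\phi - \Id)$, which annihilates every summand except the $i$-th and acts invertibly on the $i$-th (since on $\Fix(\mu_i\circ\phi)$ the map $\mu_j\phi$ acts as $\mu_j\mu_i^{-1} = \mu_{ji^{-1}}$, a scalar $\not\equiv 1 \bmod p$, so $\mu_j\phi - \Id$ is invertible there).
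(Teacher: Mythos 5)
Your plan is correct, and it rests on the same structural fact as the paper's proof: the subgroups \(\Fix(\mu_i \circ \phi)\), \(i \in \{1, \ldots, p-1\}\), are independent in \(P\) (any relation \(\sum_i x_i = 0\) with \(x_i \in \Fix(\mu_i\circ\phi)\) forces all \(x_i = 0\)), so their internal direct sum embeds in \(P\) and \(\Pi(\phi) \leq \size{P} = p^{\Sigma(e)}\). You are also right to discard the two dead ends you flag along the way: pairwise disjointness of the nonzero parts plus a convexity argument is genuinely insufficient (your \(\left(\ZmodZ{p}\right)^{2}\) counterexample is exactly the obstruction), and the momentary confusion about whether \((i-j)\phi(x)=0\) forces \(\phi(x)=0\) resolves correctly because \(i-j\) is a unit modulo the exponent of \(P\). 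Where you differ from the paper is in how independence is established. The paper runs an induction on the size of a subset \(\J \subseteq \{1,\ldots,p-1\}\setminus\{i\}\), isolating one summand \(x_{j_0}\) from a relation and applying the induction hypothesis to \(j_0\) and \(\J\setminus\{j_0\}\). You instead hit the relation with the operator \(\prod_{j\neq i}(\mu_j\phi - \Id)\): these operators commute (they are polynomials in \(\phi\) with integer coefficients), each factor annihilates the corresponding \(\Fix(\mu_j\circ\phi)\), and on \(\Fix(\mu_i\circ\phi)\), where \(\phi\) acts as multiplication by an inverse of \(i\), the product acts as the scalar \(\prod_{j\neq i}(ji^{-1}-1)\), which is a unit modulo \(p\) and hence an automorphism of the \(p\)-group. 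This one-shot annihilator argument is shorter and arguably cleaner than the paper's induction; both deliver the same key lemma and the same final count.
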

\begin{proof}
	Fix \(\phi \in \Aut(P)\). We first prove that
	\[
		\Fix(\mu_{i} \circ \phi) \cap \grpgen{\Fix(\mu_{j} \circ \phi) \mid j \ne i}
	\]
	is trivial for all \(i \in \{1, \ldots, p - 1\}\). We proceed by induction, namely by proving that, for all \(k \in \{1, \ldots, p - 2\}\) and all \(\J \subseteq (\{1, \ldots, p - 1\} \setminus \{i\})\) with \(\size{\J} = k\), the intersection
	\[
		\Fix(\mu_{i} \circ \phi) \cap \grpgen{\Fix(\mu_{j} \circ \phi) \mid j \in \J}
	\]
	is trivial. We start with \(k = 1\), that is, with \(\J = \{j\}\) with \(j \ne i\). An element \(x\) in the intersection then satisfies \(x = i \phi(x) = j \phi(x)\), or equivalently, \((i - j) \phi(x) = 0\). As \(i \ne j\) and both lie in \(\{1, \ldots, p - 1\}\), we know that \(i - j\) is invertible modulo \(p\), hence \(\phi(x) = 0\). Since \(x = i \phi(x)\), we conclude that \(x = 0\). This proves the claim for \(k = 1\).
	
	Now, suppose that it holds for all \(\J\) of size \(k\) or less. Let \(\J\) be a set of size \(k + 1\) not containing \(i\) and let \(x\) be an element in the intersection \(\Fix(\mu_{i} \circ \phi) \cap \grpgen{\Fix(\mu_{j} \circ \phi) \mid j \in \J}\). Write \(x = \sum_{j \in \J} x_{j}\), with \(x_{j} \in \Fix(\mu_{j} \circ \phi)\). On the one hand, we have
	\[
		x = i \phi(x) =  i \sum_{j \in \J} \phi(x_{j}) = \sum_{j \in \J} i \phi(x_{j}),
	\]
	while on the other hand, we have
	\[
		x = \sum_{j \in \J} x_{j} = \sum_{j \in \J} j \phi(x_{j}).
	\]
	Therefore,
	\[
		0 = \sum_{j \in \J} (j - i) \phi(x_{j}).
	\]	
	Now, fix \(j_{0} \in \J\) and put \(\J' := \J \setminus \{j_{0}\}\). We can rewrite the equality above to
	\[
		(j_{0} - i) \phi(x_{j_{0}}) = \sum_{j \in \J'} -(j - i)\phi(x_{j}).
	\]
	Since \(\phi\) is an automorphism, we can apply \(\inv{\phi}\) to get
	\[
		(j_{0} - i) x_{j_{0}} = \sum_{j \in \J'} -(j - i)x_{j}
	\]
	The left-hand side lies in \(\Fix(\mu_{j_{0}} \circ \phi)\), the right-hand side is an element of \(\grpgen{\Fix(\mu_{j} \circ \phi) \mid j \in \J'}\). Applying the induction hypothesis to \(j_{0}\) and \(\J'\), we find that both sides are trivial, that is, \((j_{0} - i)x_{j_{0}} = 0 = \sum_{j \in \J'} -(j - i)x_{j}\). As \(i \ne j_{0}\), this implies \(x_{j_{0}} = 0\). Continuing in this fashion yields \(x_{j} = 0\) for all \(j \in \J\), finishing the induction. The original claim then follows from the case where \(\J = \{1, \ldots, p - 1\} \setminus \{i\}\).

From the above, it follows that
\[
	p^{\Sigma(e)} = \size{P}  \geq \size{\grpgen{\Fix(\mu_{i} \circ \phi) \mid i \in \{1, \ldots, p - 1\}}} = \prod_{i = 1}^{p - 1} \size{\Fix(\mu_{i} \circ \phi)} = \Pi(\phi),
\]
proving the upper bound.
\end{proof}
\subsection{Filling in the gaps}
We now completely determine \(\SpecP(P)\).
\begin{theorem}	\label{theo:BoundsProductNumberAreSharp}
	Let \(P\) be a finite abelian \(p\)-group of type \(e\). Then
	\[
		\SpecP(P) = \{p^{m} \mid m \in \{b(e) + c(e), \ldots, \Sigma(e)\}.
	\]
\end{theorem}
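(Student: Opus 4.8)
The two previous subsections give the containment $\SpecP(P) \subseteq \{p^m \mid b(e)+c(e) \leq m \leq \Sigma(e)\}$ together with the fact that both endpoints are attained (the lower bound $p^{b(e)+c(e)}$ by \cref{theo:LowerBoundNumberOfFixedPointsAbelianPGroup}, once we exhibit an automorphism meeting it, and the upper bound $p^{\Sigma(e)}$ by \cref{prop:UpperBoundNumberOfFixedPointsAbelianPGroup}). So the work that remains is purely one of \emph{interpolation}: for every integer $m$ with $b(e)+c(e) \leq m \leq \Sigma(e)$, construct some $\phi \in \Aut(P)$ with $\Pi(\phi) = p^m$. The plan is to reduce to the block structure of the $abc$-decomposition and build $\phi$ block by block, exploiting that an automorphism which respects the block decomposition (i.e.\ is a direct sum of automorphisms of the sub-$p$-groups corresponding to the $a$-, $b$- and $c$-blocks) has $\Pi$ equal to the product of the $\Pi$-values on the blocks.

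\textbf{Step 1: reduction to a single block.} First I would record that if $e$ is the concatenation $e = (f^{(1)}, \ldots, f^{(r)})$ of the blocks, and $P = \bigoplus_k P_{p, f^{(k)}}$ correspondingly, then choosing $\phi = \bigoplus_k \phi_k$ with $\phi_k \in \Aut(P_{p,f^{(k)}})$ gives $\mu_i \circ \phi = \bigoplus_k \mu_i \circ \phi_k$, hence $\Fix(\mu_i\circ\phi) = \bigoplus_k \Fix(\mu_i \circ \phi_k)$ and $\Pi(\phi) = \prod_k \Pi(\phi_k)$. Since $b(e)+c(e) = \sum_k (b(f^{(k)})+c(f^{(k)}))$ and $\Sigma(e) = \sum_k \Sigma(f^{(k)})$, it suffices to show that for each block type, $\Pi$ realises \emph{every} power of $p$ from $p^{b+c}$ up to $p^{\Sigma}$ on that block, and then patch the exponents together (pick the target exponent $m - (b(e)+c(e))$ and distribute it greedily among the blocks, using that each block contributes a full interval starting at its own minimum). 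So the whole theorem comes down to three model computations.

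\textbf{Step 2: the three model blocks.} For an $a$-block $f = (t, t, \ldots, t)$ of length $\ell \geq 2$ (so $P_{p,f} \cong (\ZmodZ{p^t})^\ell$, $b+c = 0$, $\Sigma = t\ell$): here $\Aut$ is $\GL(\ell, \Z/p^t\Z)$ and I would pick a diagonal-type automorphism, or better, use \cref{lem:ReidemeisterNumbersOfAut(G1)xAut(Gn)} together with cyclic computations à la \cref{lem:ReidemeisterNumberCyclicGroups}; one needs to check $\Pi$ (not just $R$) ranges over $\{p^0, \ldots, p^{t\ell}\}$. For a single $c$-block $f = (t)$ ($P_{p,f} = \ZmodZ{p^t}$, $b+c=1$, $\Sigma = t$): $\mu_i\circ\phi$ has $1 \mapsto ik$ where $\phi(1)=k$, so $\Pi(\phi) = \prod_{i=1}^{p-1}\gcd(ik-1, p^t)$ by \cref{lem:ReidemeisterNumberCyclicGroups}; one shows that as $k$ ranges over units, this product ranges over $\{p^1, \ldots, p^t\}$ — for exactly one $i$ is $ik \equiv 1 \bmod p$, and for that $i$ the valuation $\nu_p(ik-1)$ can be tuned to any value in $\{1, \ldots, t\}$ by choosing $k \equiv i^{-1} \bmod{p^j}$ but $\not\equiv i^{-1} \bmod{p^{j+1}}$, while all other factors contribute $p^0$. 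For a $b$-block $f = (t, t+1)$ ($P_{p,f} = \ZmodZ{p^t} \oplus \ZmodZ{p^{t+1}}$, $b+c=1$, $\Sigma = 2t+1$): this is the genuinely two-dimensional case and I would work with $2\times 2$ matrices $M \in A(P_{p,f})$, i.e.\ $M_{21} \equiv 0 \bmod p$, and compute $\size{\Fix(\mu_i \circ \phi)} = \size{\ker(iM - I)}$ over $\ZmodZ{p^t}\oplus\ZmodZ{p^{t+1}}$ as a function of the entries, then show the product over $i$ sweeps out every power from $p^1$ to $p^{2t+1}$ by an explicit one-parameter family.

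\textbf{Main obstacle.} The delicate point is the $b$-block computation: controlling $\size{\ker(iM-I)}$ on a non-homocyclic group $\ZmodZ{p^t}\oplus\ZmodZ{p^{t+1}}$ requires tracking $p$-adic valuations of the entries of $iM - I$ (including the off-diagonal constraint $M_{21} \in p\Z$) and invoking the Smith normal form / elementary-divisor count for the kernel, and then verifying that the product $\prod_{i=1}^{p-1}$ — in which generically only one or two factors are nontrivial — can be made to equal each intermediate power $p^m$, $1 \le m \le 2t+1$, without skipping values. I expect this to need a careful case split on $\nu_p(iM_{11}-1)$, $\nu_p(iM_{22}-1)$ and the rank of the reduction mod the relevant prime powers, analogous to (but finer than) the argument already used in the proof of \cref{theo:LowerBoundNumberOfFixedPointsAbelianPGroup}. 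Once the three model blocks are in hand, Step 1 assembles them and, combined with \cref{prop:UpperBoundNumberOfFixedPointsAbelianPGroup} and \cref{theo:LowerBoundNumberOfFixedPointsAbelianPGroup} for the two inclusions, closes the proof.
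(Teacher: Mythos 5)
Your overall route is the same as the paper's: decompose \(P\) along the \(abc\)-blocks, use multiplicativity of \(\Pi\) over direct sums (the paper's \cref{lem:ProductNumberOfDirectProduct}), compute \(\SpecP\) for each model block, and observe that the product of the resulting full intervals of exponents is again a full interval. The \(c\)-block computation you sketch is essentially the paper's \cref{prop:ProductNumberPrimePowerCyclicGroup}, and your interval-patching in Step~1 is sound.

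The genuine gap is in the \(a\)-block, not the \(b\)-block where you place the ``main obstacle''. For an \(a\)-block \((\ZmodZ{p^{t}})^{\ell}\) you must realise \emph{every} exponent in \(\{0, \ldots, t\ell\}\), since \(a\)-blocks contribute nothing to \(b(e)+c(e)\); but every automorphism \(k\colon 1 \mapsto k\) of a cyclic group \(\ZmodZ{p^{t}}\) has \(\Pi \geq p\) (for the unique \(i \in \{1,\ldots,p-1\}\) with \(ik \equiv 1 \bmod p\) the factor \(\gcd(ik-1,p^{t})\) is at least \(p\)), so any ``diagonal-type'' automorphism, i.e.\ a direct sum of \(\ell\) cyclic automorphisms, has \(\Pi \geq p^{\ell}\). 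Your proposed method therefore cannot reach the exponents \(0, 1, \ldots, \ell-1\), and in particular cannot produce \(\Pi(\phi) = 1\). Realising \(\Pi(\phi)=1\) forces the reduction of \(\phi\) modulo \(p\) to have no eigenvalue in \(\ZpZ\) at all (otherwise some \(\mu_i\circ\phi\) has a nontrivial fixed point mod \(p\)); the paper does this with the companion matrix of an irreducible degree-\(\ell\) polynomial over \(\ZpZ\), lifted via \cref{lem:FixedPointsOnExponentpGroup}, then gets exponent \(1\) for \(\ell = 2\) from the matrix \(\left(\begin{smallmatrix}1 & 1\\ p & 1\end{smallmatrix}\right)\), and handles general \(\ell\) by splitting the homocyclic block into pairs (plus one cyclic factor when \(\ell\) is odd, with the value \(1\) supplied separately by the companion-matrix construction). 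None of these ideas appears in your sketch, so the low end of the \(a\)-block interval is unaccounted for. By contrast, the \(b\)-block \(\ZmodZ{p^{t}}\oplus\ZmodZ{p^{t+1}}\) needs no Smith-normal-form sweep: diagonal automorphisms already give every exponent from \(2\) to \(2t+1\), and only the single value \(p^{1}\) requires one explicit non-diagonal matrix (again \(\left(\begin{smallmatrix}1 & 1\\ p & 1\end{smallmatrix}\right)\)), whose twisted fixed points can be computed directly.
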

In order to prove this theorem, we first prove it for several special cases.
\begin{prop}	\label{prop:ProductNumberPrimePowerCyclicGroup}
	Let \(n \geq 1\) be a natural number. Then \(\SpecP(\ZmodZ{p^{n}}) = {\{p^{i} \mid i \in \{1, \ldots, n\}\}}\).
\end{prop}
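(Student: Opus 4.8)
The plan is to fall back on the completely explicit description of endomorphisms of a cyclic group. Every element of $\End(\ZmodZ{p^n})$ is of the form $1 \mapsto k$ for some $k \in \Z$, and it is an automorphism exactly when $\gcd(k,p)=1$. Given such a $\phi$ and $i \in \{1,\dots,p-1\}$, the composite $\mu_i \circ \phi$ is the endomorphism $1 \mapsto ik$, so \cref{lem:ReidemeisterNumberCyclicGroups} gives $\size{\Fix(\mu_i \circ \phi)} = \gcd(ik-1,p^n)$, and hence
\[
	\Pi(\phi) = \prod_{i=1}^{p-1} \gcd(ik-1,p^n).
\]

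First I would establish the inclusion $\SpecP(\ZmodZ{p^n}) \subseteq \{p^i \mid i \in \{1,\dots,n\}\}$. The key observation is that at most one factor in the product above is nontrivial: since $k$ is a unit modulo $p$, the map $i \mapsto ik \bmod p$ permutes $\{1,\dots,p-1\}$, so there is a unique $i_0 \in \{1,\dots,p-1\}$ with $i_0 k \equiv 1 \bmod p$, and for every other $i$ one has $ik - 1 \not\equiv 0 \bmod p$, whence $\gcd(ik-1,p^n)=1$. Therefore $\Pi(\phi) = \gcd(i_0 k - 1, p^n)$, which is a power of $p$ lying between $p^1$ (because $p \mid i_0 k - 1$) and $p^n$. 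One may also note that this inclusion is immediate from \cref{theo:LowerBoundNumberOfFixedPointsAbelianPGroup,prop:UpperBoundNumberOfFixedPointsAbelianPGroup}, since for $e=(n)$ we have $\Sigma(e)=n$ and the $abc$-decomposition of $e$ consists of a single $c$-block, so $b(e)+c(e)=1$.

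For the reverse inclusion, fix $m \in \{1,\dots,n\}$ and take $\phi$ to be the automorphism determined by $\phi(1) = 1 + p^m$ (an automorphism since $1+p^m \equiv 1 \bmod p$). Then $i_0 = 1$: for $i=1$ we get $\gcd(p^m,p^n) = p^m$, while for $i \ne 1$ the integer $i-1$ lies in $\{1,\dots,p-2\}$ and is coprime to $p$, so $\gcd(i(1+p^m)-1,p^n) = \gcd((i-1)+ip^m,p^n) = 1$. Hence $\Pi(\phi) = p^m$, which finishes the proof. I do not anticipate a genuine obstacle here; the only point that requires a moment's care is recognising that exactly one term of the defining product can be a nontrivial power of $p$, which reduces everything to a single gcd computation.
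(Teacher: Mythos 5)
Your proposal is correct and matches the paper's proof in its essential content: the reverse inclusion uses exactly the same automorphism \(1 \mapsto 1 + p^{m}\) with the same gcd computation showing only the \(i=1\) factor is nontrivial. The only (harmless) difference is that for the forward inclusion the paper simply invokes \cref{theo:LowerBoundNumberOfFixedPointsAbelianPGroup,prop:UpperBoundNumberOfFixedPointsAbelianPGroup}, whereas you additionally supply a self-contained elementary argument via the unique \(i_{0}\) with \(i_{0}k \equiv 1 \bmod p\).
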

\begin{proof}
	The \(\subseteq\)-inclusion follows from \cref{theo:LowerBoundNumberOfFixedPointsAbelianPGroup} and \cref{prop:UpperBoundNumberOfFixedPointsAbelianPGroup}. Conversely, let \(m \in \{1, \ldots, n\}\) be arbitrary. Define \(\phi_{m}: \ZmodZ{p^{n}} \to \ZmodZ{p^{n}}: 1 \mapsto p^{m} + 1\). Since \(m \geq 1\), we know that \(\gcd(p^{m} + 1, p) = 1\). Therefore, \(\phi_{m}\) defines an automorphism of \(\ZmodZ{p^{n}}\). Moreover, for \(i \in \{1, \ldots, p - 1\}\), we know by \cref{prop:ReidemeisterNumberEqualsNumberOfFixedPoints,lem:ReidemeisterNumberCyclicGroups} that
	\[
		\size{\Fix(\mu_{i} \circ \phi_{m})} = R(\mu_{i} \circ \phi_{m}) = \gcd(i(p^{m} + 1) - 1, p^{n}) = \begin{cases}
			p^{m}	& \mbox{if } i = 1	\\
			1		& \mbox{otherwise,}
		\end{cases}
	\]
	as \(i(p^{m} + 1) - 1 \equiv i - 1 \not \equiv 0 \bmod p\) when \(i \not \equiv 1 \bmod p\). Therefore, \(\Pi(\phi_{m}) = p^{m}\).
\end{proof}

\begin{lemma}	\label{lem:ProductNumberOfDirectProduct}
	Let \(P_{1}, \ldots, P_{n}\) be abelian \(p\)-groups and put \(P = P_{1} \oplus \ldots \oplus P_{n}\). For \(i \in \{1, \ldots, n\}\), let \(\phi_{i} \in \Aut(P_{i})\). Put \(\phi := (\phi_{1}, \ldots, \phi_{n}) \in \Aut(P)\). Then \(\Pi(\phi) = \prod_{i = 1}^{n} \Pi(\phi_{i})\).
	
	Consequently, \(\prod_{i = 1}^{n} \SpecP(P_{i}) \subseteq \SpecP(P)\).
\end{lemma}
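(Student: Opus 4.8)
The plan is to prove the two assertions in turn, with the multiplicativity of $\Pi$ under direct sums as the heart of the matter and the inclusion of Reidemeister product spectra following formally. First I would unravel what $\mu_i \circ \phi$ looks like on $P = P_1 \oplus \ldots \oplus P_n$. Since the scalar automorphism $\mu_i$ of $P$ acts coordinatewise by multiplication by $i$ on each summand, and since $\phi = (\phi_1, \ldots, \phi_n)$ respects the decomposition, the composite $\mu_i \circ \phi$ is again diagonal with respect to the direct sum: on the summand $P_k$ it acts as $\mu_i^{(k)} \circ \phi_k$, where $\mu_i^{(k)}$ is the corresponding scalar automorphism of $P_k$. (Here one should note $i$ is automatically coprime to $p$ since $i \in \{1, \ldots, p-1\}$, so $\mu_i^{(k)} \in \Aut(P_k)$ is legitimate.) A fixed point of a direct sum of endomorphisms is precisely a tuple of fixed points, so $\Fix(\mu_i \circ \phi) = \bigoplus_{k=1}^n \Fix(\mu_i^{(k)} \circ \phi_k)$, and therefore $\size{\Fix(\mu_i \circ \phi)} = \prod_{k=1}^n \size{\Fix(\mu_i^{(k)} \circ \phi_k)}$.

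Next I would take the product of these equalities over $i \in \{1, \ldots, p-1\}$ and interchange the two finite products:
\[
	\Pi(\phi) = \prod_{i=1}^{p-1} \size{\Fix(\mu_i \circ \phi)} = \prod_{i=1}^{p-1} \prod_{k=1}^{n} \size{\Fix(\mu_i^{(k)} \circ \phi_k)} = \prod_{k=1}^{n} \prod_{i=1}^{p-1} \size{\Fix(\mu_i^{(k)} \circ \phi_k)} = \prod_{k=1}^{n} \Pi(\phi_k),
\]
which is the claimed formula. For the consequence, given $m_k \in \SpecP(P_k)$ for each $k$, choose $\phi_k \in \Aut(P_k)$ with $\Pi(\phi_k) = m_k$; then $\phi := (\phi_1, \ldots, \phi_n) \in \Aut(P)$ satisfies $\Pi(\phi) = \prod_{k=1}^n m_k$ by the formula just proved, so every element of $\prod_{k=1}^n \SpecP(P_k)$ lies in $\SpecP(P)$.

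The only genuine point requiring care — and the thing I would want to state cleanly rather than wave at — is the identification $\mu_i \circ (\phi_1, \ldots, \phi_n) = (\mu_i^{(1)} \circ \phi_1, \ldots, \mu_i^{(n)} \circ \phi_n)$, i.e.\ that the global scalar automorphism restricts to the scalar automorphism on each summand and hence commutes with the block structure. This is immediate from the fact that multiplication by the integer $i$ is defined pointwise on $P$ and each $P_k$ is a subgroup closed under it, but it is the one place where one actually uses that the $\mu_i$ are scalar multiplications rather than arbitrary automorphisms. Everything else — that fixed-point sets of direct sums decompose, and that finite products commute — is routine. So I expect no real obstacle; the proof is short, and the main task is simply to phrase the coordinatewise decomposition precisely.
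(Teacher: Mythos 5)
Your proposal is correct and follows essentially the same route as the paper: identify $\mu_i \circ \phi$ with the tuple of $\mu_{i}^{(k)} \circ \phi_k$ on the summands, decompose the fixed-point set accordingly, and swap the two finite products. No gaps.
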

\begin{proof}
	Let \(\mu_{i}\) be multiplication by \(i\) on \(P\) and let \(\mu_{i, j}\) denote its restriction to \(P_{j}\). We then have that
	\begin{align*}
		\Pi(\phi)	&=	\prod_{i = 1}^{p - 1} \size{\Fix(\mu_{i} \circ \phi)}	\\
				&=	\prod_{i = 1}^{p - 1} \size{\Fix\big((\mu_{i, 1} \circ \phi_{1}, \ldots, \mu_{i, n} \circ \phi_{n})\big)}	\\
				&=	\prod_{i = 1}^{p - 1} \prod_{j = 1}^{n} \size{\Fix(\mu_{i, j} \circ \phi_{j})}	\\
				&=	\prod_{j = 1}^{n} \prod_{i = 1}^{p - 1} \size{\Fix(\mu_{i, j} \circ \phi_{j})}	\\
				&=	\prod_{j = 1}^{n} \Pi(\phi_{j}).	\qedhere
	\end{align*}
\end{proof}
\begin{prop}	\label{prop:ProductNumberOfTypeb}
	Let \(n \geq 1\) be a natural number and put \(P = \ZmodZ{p^{n}} \oplus \ZmodZ{p^{n + 1}}\). Then \(\SpecP(P) = \{p^{i} \mid i \in \{1, \ldots, 2n + 1\}\}\).
\end{prop}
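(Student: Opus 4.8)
The plan is to prove the two inclusions separately; the reverse inclusion is where the actual work lies. Write $e = (n, n+1)$; its $abc$-decomposition consists of a single $b$-block, namely $(n, n+1)$, so $a(e) = c(e) = 0$, $b(e) = 1$, and $\Sigma(e) = 2n+1$. Hence \cref{theo:LowerBoundNumberOfFixedPointsAbelianPGroup} and \cref{prop:UpperBoundNumberOfFixedPointsAbelianPGroup} give $p \leq \Pi(\phi) \leq p^{2n+1}$ for every $\phi \in \Aut(P)$, and since $\Pi(\phi)$ is a power of $p$ this already yields $\SpecP(P) \subseteq \{p^{m} \mid 1 \leq m \leq 2n+1\}$.

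For the inclusion $\supseteq$, I would first realise the exponents $m \geq 2$ using only diagonal automorphisms. By \cref{lem:ProductNumberOfDirectProduct} together with \cref{prop:ProductNumberPrimePowerCyclicGroup},
\[
	\SpecP(\ZmodZ{p^{n}}) \cdot \SpecP(\ZmodZ{p^{n+1}}) = \{p^{i+j} \mid 1 \leq i \leq n,\ 1 \leq j \leq n+1\} \subseteq \SpecP(P),
\]
and the attainable values of $i + j$ form exactly the interval $\{2, 3, \ldots, 2n+1\}$. It remains to realise the exponent $m = 1$, that is, to exhibit some $\phi \in \Aut(P)$ with $\Pi(\phi) = p$.

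For this I would take $\phi$ to be the automorphism of $P$ represented, in the sense of \cref{theo:AutomorphismGroupAbelianPGroup}, by the matrix $M = \begin{pmatrix} 1 & 1 \\ p & 1 \end{pmatrix}$. This matrix lies in $A(P)$ because $p^{e_{2} - e_{1}} = p$ divides $M_{21}$, and $M \bmod p$ has determinant $1$, so $\phi$ is indeed an automorphism. For each $i \in \{1, \ldots, p-1\}$ the endomorphism $\Id - \mu_{i} \circ \phi$ is represented by $I - iM$ (which again lies in $A(P)$), so $\size{\Fix(\mu_{i} \circ \phi)}$ is the number of $(\bar{x}, \bar{y}) \in \ZmodZ{p^{n}} \oplus \ZmodZ{p^{n+1}}$ satisfying
\[
	(1 - i)x - iy \equiv 0 \bmod p^{n}, \qquad -ipx + (1 - i)y \equiv 0 \bmod p^{n+1}.
\]
When $i = 1$ this reduces to $y \equiv 0 \bmod p^{n}$ and $x \equiv 0 \bmod p^{n}$, which has $p \cdot 1 = p$ solutions. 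When $2 \leq i \leq p - 1$, both $i$ and $1 - i$ are units modulo $p$; the first congruence then forces $x \equiv (1-i)^{-1} i y \bmod p^{n}$, and since the second congruence depends on $x$ only modulo $p^{n}$, substituting leaves $y$ multiplied by a unit modulo $p^{n+1}$, so $y = 0$ and hence $x = 0$. Thus $\size{\Fix(\mu_{i} \circ \phi)} = 1$ for $i \neq 1$, and therefore $\Pi(\phi) = p$, as desired.

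I expect the fixed-point computation for this single automorphism to be the only step requiring genuine thought; everything else is bookkeeping with the $abc$-decomposition and direct appeals to the bounds of the two preceding subsections and to \cref{lem:ProductNumberOfDirectProduct}. When $p = 2$ the case $2 \leq i \leq p-1$ above is vacuous and the construction simplifies: $\Pi(\phi) = \size{\Fix(\phi)} = 2$ from the computation with $i = 1$ alone.
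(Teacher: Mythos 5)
Your proposal is correct and follows essentially the same route as the paper: the bounds give the inclusion $\subseteq$, the exponents $m \geq 2$ come from diagonal automorphisms via \cref{lem:ProductNumberOfDirectProduct} and \cref{prop:ProductNumberPrimePowerCyclicGroup}, and the exponent $m = 1$ is realised by the very same matrix $\begin{pmatrix} 1 & 1 \\ p & 1 \end{pmatrix}$. The only (cosmetic) difference is that you phrase the fixed-point count as counting the kernel of $I - iM$ rather than writing the fixed-point congruences directly; the computation is the same.
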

\begin{proof}
	Again, the \(\subseteq\)-inclusion follows from \cref{theo:LowerBoundNumberOfFixedPointsAbelianPGroup} and \cref{prop:UpperBoundNumberOfFixedPointsAbelianPGroup}. Conversely, let \(m \in \{1, \ldots, 2n + 1\}\). For \(m \geq 2\), we can find an automorphism \(\phi\) with \(\Pi(\phi) = p^{m}\) using  \cref{lem:ProductNumberOfDirectProduct} and \cref{prop:ProductNumberPrimePowerCyclicGroup}. Thus, suppose that \(m = 1\). Consider the matrix
	\[
		M = \begin{pmatrix} 1	&	1	\\
					p	&	1	\end{pmatrix}.
	\]
	By \cref{theo:AutomorphismGroupAbelianPGroup}, \(M\) defines an automorphism \(\phi\) on \(P\). First, we determine the fixed points of \(\phi\). If \(\phi(\pi(\transpose{(x, y)})) = \pi(\transpose{(x, y)})\), then
	\[
		\begin{cases}
			x + y \equiv x \bmod p^{n}	\\
			px + y \equiv y \bmod p^{n + 1}.
		\end{cases}
	\]
	This implies that \(y \equiv 0 \bmod p^{n}\) as well as \(x \equiv 0 \bmod p^{n}\). Therefore, the fixed points of \(\phi\) lie in the subgroup \(\grpgen{\pi(\transpose{(0, p^{n})})}\) and it is easily verified that \(\phi(\pi(\transpose{(0, p^{n})})) = \pi(\transpose{(0, p^{n})})\). Consequently, \(\size{\Fix(\phi)} = p\).
	
	Now, let \(i \in \{2, \ldots, p - 1\}\) and consider \(\mu_{i} \circ \phi\). If \((\mu_{i} \circ \phi)(\pi(\transpose{(x, y)})) = \pi(\transpose{(x, y)})\), then
	\[
		\begin{cases}
			ix + iy \equiv x \bmod p^{n}	\\
			ipx + iy \equiv y \bmod p^{n + 1}.
		\end{cases}
	\]
	The second congruence yields \((i - 1)y \equiv -ipx \bmod p^{n + 1}\). Since \(i \in \{2, \ldots, p - 1\}\), the number \(i - 1\) has an inverse  modulo \(p^{n + 1}\), say, \(j\). Substituting \(- jipx\) in the first congruence then yields
	\[
		x(i - i^{2}jp - 1) \equiv 0 \bmod p^{n}.
	\]
	Since \(i - i^{2} jp - 1 \equiv i - 1 \not \equiv 0 \bmod p\), it is invertible modulo \(p^{n}\). Therefore, \(x \equiv 0 \bmod p^{n}\). Combined with \((i - 1)y \equiv -ipx \bmod p^{n + 1}\) this yields \(y \equiv 0 \bmod p^{n + 1}\). Consequently, \(\size{\Fix(\mu_{i} \circ \phi)} = 1\). We conclude that \(\Pi(\phi) = p\).
\end{proof}

\begin{lemma}	\label{lem:FixedPointsOnExponentpGroup}
	Let \(n, k\) be integers with \(n \geq 2\), \(k \geq 1\). Put \(P = \bigoplus_{i = 1}^{n} \ZmodZ{p^{k}}\). Let \(\phi \in \Aut(P)\) and denote by \(\bar{\phi}\) the induced automorphism on \(P / pP\). If \(\bar{\phi}\) has no non-trivial fixed points, then neither does \(\phi\).
\end{lemma}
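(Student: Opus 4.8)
The plan is to translate the whole statement into linear algebra over the ring $\ZmodZ{p^{k}}$ and then invoke that a square matrix whose determinant is a unit is invertible.

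Since $e = (k, \ldots, k)$ is homogeneous, the divisibility conditions in \cref{theo:AutomorphismGroupAbelianPGroup} are vacuous, so $A(P) = \Z^{n \times n}$, and $\phi$ is represented by some $M \in \Z^{n \times n}$ with $M \bmod p \in \GL(n, \ZpZ)$. By \cref{lem:InducedMatrixOnExponentpQuotient}, the induced automorphism $\bar{\phi}$ on $P / pP$ is represented by $M \bmod p$ with respect to the standard basis. Letting $\pi : \Z^{n} \to P$ be the natural projection and identifying $P$ with $(\ZmodZ{p^{k}})^{n}$, a fixed point $\pi(\transpose{x})$ of $\phi$ is precisely a vector with $(M - I_{n})\transpose{x} \equiv 0 \bmod p^{k}$, whereas the image of $\pi(\transpose{x})$ in $P / pP$ is fixed by $\bar{\phi}$ exactly when $(M - I_{n})\transpose{x} \equiv 0 \bmod p$.

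First I would unwind the hypothesis: $\bar{\phi}$ having no non-trivial fixed points says that $\transpose{x} \mapsto (M - I_{n})\transpose{x}$ is injective on $(\ZpZ)^{n}$, equivalently $\det(M - I_{n}) \not\equiv 0 \bmod p$. Hence $\det(M - I_{n})$ is coprime to $p$, so it is a unit in $\ZmodZ{p^{k}}$. The key step is then the standard fact that, via the adjugate matrix and the identity $\operatorname{adj}(A)\,A = \det(A)\,I_{n}$, the matrix $M - I_{n}$ is invertible over the ring $\ZmodZ{p^{k}}$. Consequently $(M - I_{n})\transpose{x} \equiv 0 \bmod p^{k}$ forces $\transpose{x} \equiv 0 \bmod p^{k}$, i.e. $\pi(\transpose{x}) = 0$, so $\phi$ has no non-trivial fixed points either.

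I expect no serious obstacle; the only points needing a word of justification are that $A(P)$ is the full matrix ring in the homogeneous case (so the formalism of \cref{theo:AutomorphismGroupAbelianPGroup} applies directly) and that being a unit modulo $p$ is the same as being a unit modulo $p^{k}$. A matrix-free alternative would be induction on $k$: a non-trivial fixed point of $\phi$ must lie in $pP$ since $\bar{\phi}$ is fixed-point-free, multiplication by $p$ identifies $P / pP$ with $pP / p^{2}P$ compatibly with the induced automorphisms, so $\restr{\phi}{pP}$ on $pP \cong \bigoplus_{i=1}^{n} \ZmodZ{p^{k-1}}$ again induces a fixed-point-free map on its exponent-$p$ quotient, with the base case $k = 1$ being trivial; but I would present the matrix argument as the main proof.
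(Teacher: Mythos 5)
Your argument is correct, but it takes a genuinely different route from the paper. The paper proves the contrapositive via a valuation argument: given a non-trivial fixed point \(\pi(\transpose{x})\) of \(\phi\), it writes \(\transpose{x} = p^{l}\transpose{y}\) with \(l\) maximal, observes that \(l < k\) and that \(\transpose{y} \not\equiv 0 \bmod p\), and divides the congruence \(M\transpose{x} \equiv \transpose{x} \bmod p^{k}\) by \(p^{l}\) to get \(M\transpose{y} \equiv \transpose{y} \bmod p^{k - l}\), hence mod \(p\), producing a non-trivial fixed point of \(\bar{\phi}\). Your proof instead goes forward: fixed-point-freeness of \(\bar{\phi}\) forces \(\det(M - I_{n}) \not\equiv 0 \bmod p\), hence \(\det(M - I_{n})\) is a unit in \(\ZmodZ{p^{k}}\), so by the adjugate identity \(M - I_{n}\) is invertible over \(\ZmodZ{p^{k}}\) and \(\phi\) has only the trivial fixed point. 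Both arguments rely on the homogeneity of the type \(e = (k, \ldots, k)\) --- you use it to regard \(P\) as a free \(\ZmodZ{p^{k}}\)-module with \(A(P)\) the full matrix ring, the paper uses it to divide a single congruence mod \(p^{k}\) by \(p^{l}\) --- so neither generalises directly to mixed types, which is consistent with how the lemma is stated. Your determinant route is arguably cleaner and, with one extra line (if \(\det(M - I_{n}) \equiv 0 \bmod p\), lift a kernel vector \(v\) to the fixed point \(\pi(p^{k-1}\transpose{v})\)), upgrades the implication to an equivalence; the paper's route avoids determinants altogether and is close in spirit to the induction on \(k\) you sketch as an alternative. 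Either version is acceptable.
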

\begin{proof}
	We proceed by contraposition. Let \(\phi\) be represented by \(M\) and let \(\pi: \Z^{n} \to P\) be the natural projection. Suppose that \(M\transpose{x} \equiv \transpose{x} \bmod p^{k}\) for some \(\transpose{x} \in \Z^{n}\) with \(\pi(\transpose{x}) \ne 0\). Here, \(M\transpose{x} \equiv \transpose{x} \bmod p^{k}\) means that \((M\transpose{x})_{i} \equiv \transpose{x}_{i} \bmod p^{k}\) for each \(i \in \{1, \ldots, n\}\). Write \(x = p^{l} y\) with \(y \in \Z^{n}\) and \(l\) maximal. Then \(l < k\), otherwise \(\pi(\transpose{x}) = 0\). In particular, \(\pi(\transpose{y}) \ne 0\).
	
	Since \(M \transpose{x} \equiv \transpose{x} \bmod p^{k}\), we find \(p^{l}M\transpose{y} \equiv p^{l} \transpose{y} \bmod p^{k}\). Dividing by \(p^{l}\) yields \(M \transpose{y} \equiv \transpose{y} \bmod p^{k - l}\). As \(l < k\), we have that \(k - l \geq 1\). In particular, \(M \transpose{y} \equiv \transpose{y} \bmod p\). Thus, if \(\rho: P \to P / pP\) is the canonical projection, it follows that \(\rho(\pi(\transpose{y}))\) is a non-trivial fixed point of \(\bar{\phi}\).
\end{proof}

\begin{prop}	\label{prop:ProductNumberOfTypea}
	Let \(n, k\) be integers with \(n \geq 2\), \(k \geq 1\). Put \(P = \bigoplus_{i = 1}^{n} \ZmodZ{p^{k}}\). Then \(\SpecP(P) = \{p^{i} \mid i \in \{0, \ldots, nk\}\}\).
\end{prop}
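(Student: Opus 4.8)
The plan is as follows. First I would record the $abc$-decomposition of the type $e=(k,\dots,k)$ of length $n\geq 2$: the whole sequence is a single $a$-block, so $b(e)=c(e)=0$, while $\Sigma(e)=nk$. Hence \cref{theo:LowerBoundNumberOfFixedPointsAbelianPGroup} and \cref{prop:UpperBoundNumberOfFixedPointsAbelianPGroup} give $1\leq\Pi(\phi)\leq p^{nk}$ for every $\phi\in\Aut(P)$, and since $\Pi(\phi)$ is always a power of $p$ this yields $\SpecP(P)\subseteq\{p^{m}\mid 0\leq m\leq nk\}$. All that remains is to realise each $p^{m}$ with $0\leq m\leq nk$ as $\Pi(\phi)$, and I would organise this by the size of $m$.

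For $n\leq m\leq nk$ I would invoke \cref{lem:ProductNumberOfDirectProduct} together with \cref{prop:ProductNumberPrimePowerCyclicGroup}: the $n$-fold product $\SpecP(\ZmodZ{p^{k}})^{(n)}$ equals $\{p^{m}\mid n\leq m\leq nk\}$ and is contained in $\SpecP(P)$. For $0\leq m\leq n-1$ one needs automorphisms that genuinely mix the cyclic summands, since (by \cref{prop:ProductNumberPrimePowerCyclicGroup}) a single summand always contributes a factor of at least $p$ to $\Pi$. The ingredients I would use, all legitimate automorphisms by \cref{theo:AutomorphismGroupAbelianPGroup} because the type is constant (so the matrix ring is all of $\Z^{n\times n}$): (a) on $r\geq 2$ summands, a lift of the companion matrix $C$ of a monic irreducible polynomial of degree $r$ over $\F_{p}$ (these exist) — such $C$ is invertible and has no eigenvalue in $\F_{p}$, so for each $i\in\{1,\dots,p-1\}$ the matrix $iC$ has no eigenvalue $1$, i.e.\ $iC-I$ is invertible modulo $p$ and hence modulo $p^{k}$, so $\mu_{i}\circ\phi$ has only the trivial fixed point and this block contributes $1$ to $\Pi$; (b) on one summand, the map $x\mapsto(1+p)x$, which has $\gcd(p,p^{k})=p$ fixed points while $\mu_{i}\circ(\cdot)$ has $\gcd(i(1+p)-1,p^{k})=1$ fixed points for $i\neq 1$, contributing exactly the factor $p$; (c) on two summands, the matrix $\bigl(\begin{smallmatrix}1&1\\ p&1\end{smallmatrix}\bigr)$, for which solving $M\transpose{(x,y)}\equiv\transpose{(x,y)}$ forces $y\equiv 0$ and $px\equiv 0\bmod p^{k}$, giving exactly $p$ fixed points, while for $i\neq 1$ the reduction of $iM-I$ has determinant $\equiv(i-1)^{2}\not\equiv 0\bmod p$, so that block too contributes $p$.

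Then I would assemble, using \cref{lem:ProductNumberOfDirectProduct} to multiply $\Pi$ across a decomposition of $P$ into $n$ cyclic summands. If $0\leq m\leq n-2$, let $\phi$ act by $x\mapsto(1+p)x$ on $m$ of the summands and by a lift of the companion matrix of an irreducible polynomial of degree $n-m\geq 2$ on the remaining $n-m$ summands; then $\Pi(\phi)=p^{m}\cdot 1=p^{m}$. If $m=n-1$, let $\phi$ act by $\bigl(\begin{smallmatrix}1&1\\ p&1\end{smallmatrix}\bigr)$ on two of the summands and by $x\mapsto(1+p)x$ on each of the remaining $n-2$ summands; then $\Pi(\phi)=p\cdot p^{n-2}=p^{n-1}$. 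Combined with the range $n\leq m\leq nk$, this exhibits every $p^{m}$ with $0\leq m\leq nk$, and together with the first paragraph proves the claimed equality.

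I expect the boundary case $m=n-1$ to be the main obstacle: the naive construction of type (a)+(b) would leave a single leftover cyclic summand, which necessarily inflates $\Pi$ by a factor of $p$, overshooting to $p^{n}$; this is precisely why the two-summand gadget in (c), contributing only $p$ instead of $p^{2}$, is needed. The remaining points are routine: checking that the gadget and the companion-matrix lifts are automorphisms (immediate from \cref{theo:AutomorphismGroupAbelianPGroup} since the type is constant) and the explicit fixed-point count of the gadget, which is a short congruence computation valid for every $k\geq 1$.
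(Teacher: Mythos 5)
Your proof is correct, and it is built from the same three gadgets as the paper's --- companion matrices of irreducible polynomials over \(\F_{p}\), the matrix \(\bigl(\begin{smallmatrix}1&1\\p&1\end{smallmatrix}\bigr)\), and the multiplications \(x\mapsto(1+p)x\) on single cyclic summands, combined via \cref{lem:ProductNumberOfDirectProduct} --- but it assembles them differently. The paper first settles \(n=2\) completely and then handles general \(n\) by splitting \(P\) into pairs \((\ZmodZ{p^{k}})^{2}\), with a leftover cyclic factor and a separate degree-\(n\) companion-matrix construction for \(m=0\) when \(n\) is odd; you instead realise each \(m\leq n-2\) in one step by placing a companion block of degree \(n-m\geq 2\) (contributing \(1\)) next to \(m\) cyclic multipliers (each contributing \(p\)), and you reserve the \(2\times 2\) gadget for the single boundary value \(m=n-1\), for exactly the reason you identify: a leftover cyclic summand is forced to contribute at least \(p\). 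A second, minor divergence is that where the paper invokes \cref{lem:FixedPointsOnExponentpGroup} to pass from ``no eigenvalue \(1\) modulo \(p\)'' to ``no non-trivial fixed points modulo \(p^{k}\)'', you argue directly that \(iC-I\) invertible modulo \(p\) is invertible modulo \(p^{k}\), and similarly you replace the paper's explicit congruence manipulation for the gadget by a determinant computation for \(iM-I\); both are valid, and all your fixed-point counts check out. Your assembly is arguably more transparent as a function of \(m\), at the negligible cost of needing irreducible polynomials of every degree between \(2\) and \(n\) rather than only degrees \(2\) and \(n\).
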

\begin{proof}
	Yet again, the \(\subseteq\)-inclusion follows from \cref{theo:LowerBoundNumberOfFixedPointsAbelianPGroup} and \cref{prop:UpperBoundNumberOfFixedPointsAbelianPGroup}. Conversely, fix \(m \in \{0, \ldots, nk\}\). For \(m \geq n\), we can find an automorphism \(\phi\) with \(\Pi(\phi) = p^{m}\) using  \cref{lem:ProductNumberOfDirectProduct} and \cref{prop:ProductNumberPrimePowerCyclicGroup}. Thus, suppose that \(m \leq n - 1\).
	
	We start with \(m = 0\). Using a primitive element of the finite field of \(p^{n}\) elements, we can find a polynomial \(f_{n}\) of degree \(n\) that is irreducible over \(\ZpZ\). Its companion matrix \(C_{f_{n}}\) (seen as matrix over \(\Z\)) is invertible modulo \(p\). Consequently, it induces, by \cref{theo:AutomorphismGroupAbelianPGroup}, an automorphism \(\phi_{f_{n}}\) of \(P\). Since \(f_{n}\) has no roots in \(\ZpZ\) (recall that \(n \geq 2\)), the matrix \(C_{f_{n}}\) has no eigenvalues in \(\ZpZ\). Therefore, \(iC_{f_{n}}\) does not have eigenvalue \(1\) for \(i \in \{1, \ldots, p - 1\}\). Thus, \cref{lem:FixedPointsOnExponentpGroup} implies that \(\mu_{i} \circ \phi_{f_{n}}\) has no non-trivial fixed points for each \(i \in \{1, \ldots, p - 1\}\). Consequently, \(\Pi(\phi_{f_{n}}) = 1\).
	
	Now, we proceed for general \(n\). First, let \(n = 2\). We already know that \(\{1, p^{2}, p^{3}, \ldots, p^{2k}\}\subseteq \SpecP(P)\). Thus, we have to find an automorphism \(\psi\) such that \(\Pi(\psi) = p\). An argument similar to the one for \cref{prop:ProductNumberOfTypeb} shows that the automorphism \(\psi\) induced by the matrix
	\[
		M = \begin{pmatrix} 1 & 1 \\ p & 1 \end{pmatrix}
	\]
	does the job. Consequently, \(\SpecP(P) = \{p^{i} \mid i \in \{0, \ldots, 2k\}\}\) for \(n = 2\).
	
	So, let \(n \geq 3\) be arbitrary. If \(n\) is even, write
	\[
		P = \bigoplus_{i = 1}^{\frac{n}{2}} \left(\ZmodZ{p^{k}}\right)^{2}.
	\]
	Then the result for \(n = 2\) combined with \cref{lem:ProductNumberOfDirectProduct} implies that
	\begin{align*}
		\SpecP\left(\left(\ZmodZ{p^{k}}\right)^{2}\right)^{\left(\frac{n}{2}\right)}	&= \{p^{i} \mid i \in \{0, \ldots, 2k\}\}^{\left(\frac{n}{2}\right)}	\\
															&= \{p^{i} \mid i \in \{0, \ldots nk\}\}	\\
															&\subseteq \SpecP(P),
	\end{align*}
	proving the result for \(n\) even. Next, suppose that \(n\) is odd. We know that \({1 \in \SpecP(P)}\) by the case \(m = 0\) above. Write
	\[
		P = \ZmodZ{p^{k}} \oplus \bigoplus_{i = 1}^{\frac{n - 1}{2}} \left(\ZmodZ{p^{k}}\right)^{2}.
	\]
	Then the result for \(n = 2\) combined with \cref{lem:ProductNumberOfDirectProduct} and \cref{prop:ProductNumberPrimePowerCyclicGroup} yields
	\begin{align*}
		\SpecP(\ZmodZ{p^{k}}) \cdot \SpecP\left(\left(\ZmodZ{p^{k}}\right)^{2}\right)^{\left(\frac{n - 1}{2}\right)}	&= \{p^{i} \mid i \in \{1, \ldots, k\}\} \cdot \{p^{i} \mid i \in \{0, \ldots, 2k\}\}^{\left(\frac{n - 1}{2}\right)}	\\
								&= \{p^{i} \mid i \in \{1, \ldots, nk\}\}	\\
								& \subseteq \SpecP(P),
	\end{align*}
	which proves the result for \(n\) odd, thereby finishing the proof.

\end{proof}

Finally, we can completely determine \(\SpecP(P)\) for arbitrary finite abelian \(p\)-groups.
\begin{proof}[Proof of \cref{theo:BoundsProductNumberAreSharp}]
We factorise \(P\) using the \(abc\)-decomposition of \(e\), \ie we write
	\[
		P = \left(\bigoplus_{i = 1}^{a(e)} \left(\ZmodZ{p^{a_{i}}}\right)^{n_{i}}\right) \oplus \left(\bigoplus_{i = 1}^{b(e)} \left(\ZmodZ{p^{b_{i}}} \oplus \ZmodZ{p^{b_{i} + 1}}\right) \right) \oplus \left(\bigoplus_{i = 1}^{c(e)} \ZmodZ{p^{c_{i}}} \right)
	\]
	where \(n_{i} \geq 2\) for all \(i \in \{1, \ldots, a(e)\}\) and \(c_{i} \geq c_{i - 1} + 2\) for all \(i \in \{1, \ldots, c(e)\}\). By \cref{theo:LowerBoundNumberOfFixedPointsAbelianPGroup,prop:UpperBoundNumberOfFixedPointsAbelianPGroup}. we know that
	\[
		\SpecP(P) \subseteq \{p^{i} \mid i \in \{b(e) + c(e), \ldots, \Sigma(e)\}\}.
	\]
	Conversely, by \cref{lem:ProductNumberOfDirectProduct,prop:ProductNumberOfTypeb,prop:ProductNumberOfTypea,prop:ProductNumberPrimePowerCyclicGroup}, \(\SpecP(P)\) contains
	\begin{align*}
		&\prod_{i = 1}^{a(e)} \SpecP\left(\left(\ZmodZ{p^{a_{i}}}\right)^{n_{i}}\right) \cdot \prod_{i = 1}^{b(e)} \SpecP\left(\ZmodZ{p^{b_{i}}} \oplus \ZmodZ{p^{b_{i} + 1}}\right) \cdot \prod_{i = 1}^{c(e)} \SpecP\left(\ZmodZ{p^{c_{i}}} \right)	\\
		&= \prod_{i = 1}^{a(e)} \{p^{j} \mid j \in \{0, \ldots, a_{i}n_{i}\}\} \cdot \prod_{i = 1}^{b(e)} \{p^{j} \mid j \in \{1, \ldots, 2b_{i} + 1\}\} \cdot \prod_{i = 1}^{c(e)} \{p^{j} \mid j \in \{1, \ldots, c_{i}\}\}	\\
				&= \{p^{i} \mid i \in \{b(e) + c(e), \ldots, \Sigma(e)\}\},
	\end{align*}
proving the theorem.
\end{proof}
In particular, since \(\SpecP(P) = \SpecR(P)\) for finite abelian \(2\)-groups, we have the following.
\begin{cor}	\label{cor:ReidemeisterSpectrumFiniteAbelian2Group}
	Let \(P\) be a finite abelian \(2\)-group of type \(e\). Then
	\[
		\SpecR(P) = \{2^{i} \mid i \in \{b(e) + c(e), \ldots, \Sigma(e)\}\}.
	\]
\end{cor}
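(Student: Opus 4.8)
The plan is to obtain this as an immediate specialisation of \cref{theo:BoundsProductNumberAreSharp}. The first step is to record, as already noted in the text preceding \cref{theo:LowerBoundNumberOfFixedPointsAbelianPGroup}, that when \(p = 2\) the product defining \(\Pi\) has a single factor: since the only element of \(\{1, \ldots, p - 1\}\) is \(1\) and \(\mu_{1} = \Id_{P}\), we have \(\Pi(\phi) = \size{\Fix(\mu_{1} \circ \phi)} = \size{\Fix(\phi)}\) for every \(\phi \in \Aut(P)\). By \cref{prop:ReidemeisterNumberEqualsNumberOfFixedPoints} this equals \(R(\phi)\), and letting \(\phi\) range over \(\Aut(P)\) yields \(\SpecP(P) = \SpecR(P)\).

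The second (and final) step is to apply \cref{theo:BoundsProductNumberAreSharp} with \(p = 2\): it gives \(\SpecP(P) = \{2^{m} \mid m \in \{b(e) + c(e), \ldots, \Sigma(e)\}\}\) for the type \(e\) of \(P\), where \(b(e)\) and \(c(e)\) are read off from the \(abc\)-decomposition of \(e\) in \cref{def:abcDecomposition}. Substituting \(\SpecR(P)\) for \(\SpecP(P)\) gives the claimed formula.

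There is no real obstacle in the corollary itself — it is a one-line consequence. All the substance lies upstream in \cref{theo:BoundsProductNumberAreSharp}, whose proof combines the lower bound of \cref{theo:LowerBoundNumberOfFixedPointsAbelianPGroup} (built from the characteristic subgroup \(Q = P(d_{1}, \ldots, d_{n})\) and the congruence properties of \(\inv{D} M D\) modulo \(p\)), the upper bound of \cref{prop:UpperBoundNumberOfFixedPointsAbelianPGroup} (triviality of the pertinent intersections of fixed-point subgroups), and the interpolation over the \(abc\)-decomposition using \cref{prop:ProductNumberPrimePowerCyclicGroup,prop:ProductNumberOfTypeb,prop:ProductNumberOfTypea} together with \cref{lem:ProductNumberOfDirectProduct}. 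The only thing to be careful about when writing the corollary is to point out explicitly that \(b(e)\), \(c(e)\) and \(\Sigma(e)\) appearing in the statement are exactly the quantities produced by that \(abc\)-decomposition, so that the identity \(\SpecP(P) = \SpecR(P)\) can be applied verbatim.
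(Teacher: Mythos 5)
Your proposal is correct and follows exactly the paper's route: the corollary is obtained by observing that for \(p = 2\) one has \(\Pi(\phi) = \size{\Fix(\mu_{1} \circ \phi)} = \size{\Fix(\phi)} = R(\phi)\) by \cref{prop:ReidemeisterNumberEqualsNumberOfFixedPoints}, so \(\SpecP(P) = \SpecR(P)\), and then specialising \cref{theo:BoundsProductNumberAreSharp}. Nothing is missing.
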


At last, by combining \cref{cor:ReidemeisterSpectrumCoprimeGroups,prop:SpecRFiniteAbelianOddPgroups,cor:ReidemeisterSpectrumFiniteAbelian2Group}, we can determine the Reidemeister spectrum of an arbitrary finite abelian group.
\begin{theorem}
	Let \(A\) be a finite abelian group. Suppose its Sylow \(2\)-subgroup is of type \(e\). Then
	\[
		\SpecR(A) = \{d \in \N \mid d \text{ divides } \size{A} \text{ and } \nu_{2}(d) \geq b(e) + c(e)\}.
	\]
\end{theorem}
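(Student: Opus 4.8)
The plan is to reduce to the prime-power case via the primary decomposition and then multiply the individual spectra together. Write $A = \bigoplus_{p \in \P} A(p)$ with $A(p)$ the Sylow $p$-subgroup of $A$; only finitely many of these are non-trivial, and they have pairwise coprime orders. Applying \cref{cor:ReidemeisterSpectrumCoprimeGroups} to the decomposition of $A$ into its non-trivial Sylow subgroups (the trivial ones contributing only $\{1\}$) gives
\[
	\SpecR(A) = \prod_{p \in \P} \SpecR(A(p)).
\]

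Next I would describe each factor using the earlier computations. For an odd prime $p$, let $e^{(p)} \in E(n_p)$ be the type of $A(p)$; then \cref{prop:SpecRFiniteAbelianOddPgroups} gives $\SpecR(A(p)) = \{p^{i} \mid i \in \{0, \ldots, \Sigma(e^{(p)})\}\}$, and since $\size{A(p)} = p^{\Sigma(e^{(p)})}$ we have $\Sigma(e^{(p)}) = \nu_{p}(\size{A})$. For $p = 2$, with $e$ the type of the Sylow $2$-subgroup, \cref{cor:ReidemeisterSpectrumFiniteAbelian2Group} gives $\SpecR(A(2)) = \{2^{i} \mid i \in \{b(e) + c(e), \ldots, \Sigma(e)\}\}$, and again $\Sigma(e) = \nu_{2}(\size{A})$. (When $p$ does not divide $\size{A}$ these formulas still hold with the empty type: for odd $p$ the factor is $\{1\} = \{p^{0}\}$, and for $p = 2$ we have $b(e) = c(e) = \Sigma(e) = 0$, so the factor is $\{1\}$.)

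Finally I would unwind the product. An element of $\prod_{p} \SpecR(A(p))$ is a product $\prod_{p} p^{a_{p}}$ where $a_{p}$ ranges over $\{0, \ldots, \nu_{p}(\size{A})\}$ for each odd $p$ and over $\{b(e) + c(e), \ldots, \nu_{2}(\size{A})\}$ for $p = 2$; such a product is exactly a natural number $d$ with $\nu_{p}(d) = a_{p} \leq \nu_{p}(\size{A})$ for every prime $p$, i.e.\ a divisor $d$ of $\size{A}$, subject to the single further restriction $\nu_{2}(d) = a_{2} \geq b(e) + c(e)$. Conversely every such $d$ arises in this way. This is precisely the asserted description of $\SpecR(A)$.

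Since every step is a direct invocation of a result already established in Sections~2--4, there is no genuine obstacle in this proof; the only points requiring a little care are the identification $\Sigma(e^{(p)}) = \nu_{p}(\size{A})$ of the top exponent with the $p$-adic valuation of the group order, and the degenerate case in which $2 \nmid \size{A}$, so that the odd-prime factors impose no condition beyond divisibility and the entire constraint comes from the $2$-part (where it is vacuous).
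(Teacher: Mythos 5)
Your proposal is correct and follows essentially the same route as the paper, which likewise obtains the result by combining \cref{cor:ReidemeisterSpectrumCoprimeGroups} (for the primary decomposition into Sylow subgroups of pairwise coprime order) with \cref{prop:SpecRFiniteAbelianOddPgroups} and \cref{cor:ReidemeisterSpectrumFiniteAbelian2Group}. Your explicit unwinding of the product of spectra into the divisor description, and your remark on the degenerate case \(2 \nmid \size{A}\), are details the paper leaves implicit but are handled correctly.
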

\section*{Acknowledgements}
The author would like to thank Karel Dekimpe for his useful remarks and suggestions.

\printbibliography

@article{BleakFelshtynGoncalves08,
	author = {Bleak, C. and Fel'shtyn, Alexander Leopol'dovich and Gon\c{c}alves, Daciberg Lima},
	date-added = {2022-05-31 14:04:42 +0200},
	date-modified = {2022-05-31 14:04:42 +0200},
	journal = {Pacific Journal of Mathematics},
	number = {1},
	pages = {1--6},
	title = {{Twisted conjugacy classes in R.\ Thompson's group $F$}},
	volume = {238},
	year = {2008}}

@article{DekimpeGoncalves14,
	author = {Dekimpe, Karel and Gon\c{c}alves, Daciberg Lima},
	date-added = {2022-05-31 14:04:42 +0200},
	date-modified = {2022-05-31 14:04:42 +0200},
	journal = {Bulletin of the London Mathematical Society},
	number = {4},
	pages = {737--746},
	title = {{The \(R_\infty\) property for free groups, free nilpotent groups and free solvable groups}},
	volume = {46},
	year = {2014},
	bdsk-file-1 = {YnBsaXN0MDDSAQIDBFxyZWxhdGl2ZVBhdGhZYWxpYXNEYXRhXxBMLi4vLi4vLi4vLi4vQnJvbm5lbi9Ud2lzdGVkQ29uanVnYWN5L1JpbmYvRGVraW1wZUdvbmNhbHZlc1JpbmZGcmVlR3JvdXBzLnBkZk8RAhAAAAAAAhAAAgAADU1hY2ludG9zaCBTU0QAAAAAAAAAAAAAAAAAANpnKG5IKwAAABdvvB9EZWtpbXBlR29uY2FsdmVzUmluZiMxNzZGQzMucGRmAAAAAAAAAAAAAAAAAAAAAAAAAAAAAAAAAAAAAAAAAAAAF2/D1yNFbAAAAAAAAAAAAAQABAAACSAAAAAAAAAAAAAAAAAAAAAEUmluZgAQAAgAANpnGl4AAAARAAgAANcjKUwAAAABABgAF2+8ABdvpgAXV5IAEDqgABA6nwANUUgAAgBmTWFjaW50b3NoIFNTRDpVc2VyczoARHJzUDoARG9jdW1lbnRzOgBCcm9ubmVuOgBUd2lzdGVkQ29uanVnYWN5OgBSaW5mOgBEZWtpbXBlR29uY2FsdmVzUmluZiMxNzZGQzMucGRmAA4ARgAiAEQAZQBrAGkAbQBwAGUARwBvAG4AYwBhAGwAdgBlAHMAUgBpAG4AZgBGAHIAZQBlAEcAcgBvAHUAcABzAC4AcABkAGYADwAcAA0ATQBhAGMAaQBuAHQAbwBzAGgAIABTAFMARAASAFVVc2Vycy9EcnNQL0RvY3VtZW50cy9Ccm9ubmVuL1R3aXN0ZWRDb25qdWdhY3kvUmluZi9EZWtpbXBlR29uY2FsdmVzUmluZkZyZWVHcm91cHMucGRmAAATAAEvAAAVAAIAC///AAAACAANABoAJABzAAAAAAAAAgEAAAAAAAAABQAAAAAAAAAAAAAAAAAAAoc=}}

@article{DekimpeGoncalves15,
	author = {Dekimpe, Karel and Gon\c{c}alves, Daciberg Lima},
	date-added = {2022-05-31 14:04:42 +0200},
	date-modified = {2022-05-31 14:04:42 +0200},
	journal = {Topological Methods in Nonlinear Analysis},
	number = {2},
	pages = {773--784},
	title = {{The \(R_\infty\) property for abelian groups}},
	volume = {46},
	year = {2015}}

@article{DekimpeKaiserTertooy19,
	author = {Dekimpe, Karel and Kaiser, Tom and Tertooy, Sam},
	date-added = {2022-05-31 14:04:42 +0200},
	date-modified = {2022-05-31 14:04:42 +0200},
	doi = {10.1016/j.jalgebra.2019.04.038},
	journal = {Journal of Algebra},
	pages = {353--375},
	title = {{The Reidemeister spectra of low dimensional crystallographic groups}},
	volume = {533},
	year = {2019},
	bdsk-url-1 = {https://doi.org/10.1016/j.jalgebra.2019.04.038}}

@article{DekimpeTertooyVargas20,
	author = {Dekimpe, Karel and Tertooy, Sam and Vargas, Antonio R.},
	date-added = {2022-05-31 14:04:42 +0200},
	date-modified = {2022-05-31 14:04:42 +0200},
	doi = {10.4310/AJM.2020.v24.n1.a6},
	journal = {The Asian Journal of Mathematics},
	number = {1},
	pages = {147--164},
	title = {{Fixed points of diffeomorphisms on nilmanifolds with a free nilpotent fundamental group}},
	volume = {24},
	year = {2020},
	bdsk-url-1 = {https://doi.org/10.4310/AJM.2020.v24.n1.a6}}

@article{FelshtynGoncalves06,
	author = {Fel'shtyn, Alexander Leopol'dovich and Gon\c{c}alves, Daciberg Lima},
	date-added = {2022-05-31 14:04:42 +0200},
	date-modified = {2022-05-31 14:04:42 +0200},
	journal = {Algebra and Discrete Mathematics},
	number = {3},
	pages = {36--48},
	title = {{Twisted conjugacy classes of automorphisms of Baumslag-Solitar groups}},
	volume = {5},
	year = {2006}}

@article{FelshtynHill94,
	author = {Fel'shtyn, Alexander Leopol'dovich and Hill, Richard},
	date-added = {2022-05-31 14:04:42 +0200},
	date-modified = {2022-05-31 14:04:42 +0200},
	doi = {10.1007/BF00961408},
	journal = {K-Theory},
	number = {4},
	pages = {367--393},
	title = {{The Reidemeister Zeta Function with Applications to Nielsen Theory and a Connection with Reidemeister Torsion}},
	volume = {8},
	year = {1994},
	bdsk-url-1 = {https://doi.org/10.1007/BF00961408}}

@article{FelshtynNasybullov16,
	author = {Fel'shtyn, Alexander Leopol'dovich and Nasybullov, Timur},
	date-added = {2022-05-31 14:04:42 +0200},
	date-modified = {2022-05-31 14:04:42 +0200},
	fjournal = {Journal of Group Theory},
	journal = {Journal of Group Theory},
	mrclass = {20E36 (20G15)},
	mrnumber = {3545910},
	mrreviewer = {Gilbert Levitt},
	number = {5},
	pages = {901--921},
	title = {The {\(R_\infty\)} and {\(S_\infty\)} properties for linear algebraic groups},
	volume = {19},
	year = {2016}}

@article{GoldsmithKarimiWhite19,
	author = {Goldsmith, Brendan and Karimi, Fatemeh and White, Noel},
	date-added = {2022-05-31 14:04:42 +0200},
	date-modified = {2022-05-31 14:04:42 +0200},
	doi = {10.1515/forum-2017-0184},
	journal = {Forum Mathematicum},
	number = {1},
	pages = {199--214},
	title = {{On the Reidemeister spectrum of an Abelian group}},
	volume = {31},
	year = {2019},
	bdsk-url-1 = {https://doi.org/10.1515/forum-2017-0184}}

@article{GoncalvesWong09,
	author = {Gon\c{c}alves, Daciberg Lima and Wong, Peter N.},
	date-added = {2022-05-31 14:04:42 +0200},
	date-modified = {2022-05-31 14:04:42 +0200},
	journal = {Journal f\"{u}r die Reine und Angewandte Mathematik},
	number = {633},
	pages = {11--27},
	title = {{Twisted Conjugacy Classes in Nilpotent Groups}},
	year = {2009}}

@article{HillarRhea07,
	author = {Hillar, Christopher J. and Rhea, Darren L.},
	date-added = {2022-05-31 14:04:42 +0200},
	date-modified = {2022-05-31 14:04:42 +0200},
	journal = {The American Mathematical Monthly},
	number = {10},
	pages = {917--923},
	title = {{Automorphisms of Finite Abelian Groups}},
	volume = {114},
	year = {2007}}

@article{KerbyRode11,
	author = {Kerby, Brent and Rode, Emma},
	date-added = {2022-05-31 14:04:42 +0200},
	date-modified = {2022-05-31 14:04:42 +0200},
	doi = {10.1080/00927871003591843},
	journal = {Communications in Algebra},
	number = {4},
	pages = {1315--1343},
	title = {{Characteristic Subgroups of Finite Abelian Groups}},
	volume = {39},
	year = {2011},
	bdsk-url-1 = {https://doi.org/10.1080/00927871003591843}}

@article{MubeenaSankaran14,
	author = {Mubeena, T. and Sankaran, Parameswaran},
	date-added = {2022-05-31 14:04:42 +0200},
	date-modified = {2022-05-31 14:04:42 +0200},
	journal = {Canadian Mathematical Bulletin},
	number = {1},
	pages = {132--140},
	title = {{Twisted Conjugacy Classes in Abelian Extensions of Certain Linear Groups}},
	volume = {57},
	year = {2014}}

@article{Romankov11,
	author = {Roman'kov, V.},
	date-added = {2022-05-31 14:04:42 +0200},
	date-modified = {2022-05-31 14:04:42 +0200},
	journal = {Journal of Pure and Applied Algebra},
	number = {4},
	pages = {664--671},
	title = {{Twisted conjugacy classes in nilpotent groups}},
	volume = {215},
	year = {2011}}

@article{Senden21,
	author = {Senden, Pieter},
	date-added = {2022-05-31 14:04:42 +0200},
	date-modified = {2022-05-31 14:04:42 +0200},
	doi = {10.1080/00927872.2021.1945615},
	journal = {Communications in Algebra},
	number = {12},
	pages = {5402--5422},
	title = {{Twisted conjugacy in direct products of groups}},
	volume = {49},
	year = {2021},
	bdsk-url-1 = {https://doi.org/10.1080/00927872.2021.1945615}}

@misc{Senden21a,
	archiveprefix = {arXiv},
	author = {Senden, Pieter},
	date-added = {2022-05-31 14:04:42 +0200},
	date-modified = {2022-05-31 14:04:42 +0200},
	eprint = {2109.12892},
	primaryclass = {math.GR},
	title = {{The Reidemeister spectrum of split metacyclic groups}},
	year = {2021}}

@misc{TabackWong08,
	archiveprefix = {arXiv},
	author = {Taback, Jennifer and Wong, Peter N.},
	date-added = {2022-05-31 14:04:42 +0200},
	date-modified = {2022-05-31 14:04:42 +0200},
	eprint = {https://arxiv.org/abs/math/0606284v3},
	primaryclass = {math.GR},
	title = {{A note on twisted conjugacy classes and generalized Baumslag-Solitar groups}},
	year = {2008}}

@misc{TertooyGAP,
	author = {Tertooy, S.},
	date-added = {2022-05-31 14:04:42 +0200},
	date-modified = {2022-05-31 14:04:42 +0200},
	howpublished = {\href {https://sTertooy.github.io/TwistedConjugacy/} {\texttt{https://sTertooy.github.io/}\discretionary {}{}{}\texttt{TwistedConjugacy/}}},
	keywords = {coincidence group; fixed point group; Reidemeister number; Reidemeister spectrum; Reidemeister zeta function; twisted conjugacy},
	month = {May},
	note = {GAP package},
	printedkey = {Ter21},
	title = {{TwistedConjugacy}, Computation with twisted conjugacy classes, {V}ersion 2.0.0},
	year = {2021}}
\end{document}